\begin{document}

 \newtheorem{thm}{Theorem}[section]
 \newtheorem{cor}[thm]{Corollary}
 \newtheorem{lem}[thm]{Lemma}{\rm}
 \newtheorem{prop}[thm]{Proposition}

 \newtheorem{defn}[thm]{Definition}{\rm}
 \newtheorem{assumption}[thm]{Assumption}
 \newtheorem{rem}[thm]{Remark}
 \newtheorem{ex}{Example}
\numberwithin{equation}{section}
\def\la{\langle}
\def\ra{\rangle}
\def\glexe{\leq_{gl}\,}
\def\glex{<_{gl}\,}
\def\e{{\rm e}}

\def\fac{{\rm !}}
\def\x{\mathbf{x}}
\def\a{\mathbf{a}}
\def\P{\mathbb{P}}
\def\S{\mathbf{S}}
\def\h{\mathbf{h}}
\def\y{\mathbf{y}}
\def\bz{\mathbf{z}}
\def\F{\mathcal{F}}
\def\R{\mathbb{R}}
\def\T{\mathbf{T}}
\def\N{\mathbb{N}}
\def\D{\mathbf{D}}
\def\V{\mathbf{V}}
\def\U{\mathbf{U}}
\def\K{\mathbf{K}}
\def\Q{\mathbf{Q}}
\def\M{\mathbf{M}}
\def\oM{\overline{\mathbf{M}}}
\def\O{\mathbf{O}}
\def\C{\mathbf{C}}
\def\P{\mathbb{P}}
\def\Z{\mathbb{Z}}
\def\H{\mathcal{H}}
\def\A{\mathbf{A}}
\def\V{\mathbf{V}}
\def\AA{\overline{\mathbf{A}}}
\def\B{\mathbf{B}}
\def\c{\mathbf{c}}
\def\L{\mathcal{L}}
\def\bS{\mathbf{S}}
\def\H{\mathcal{H}}
\def\I{\mathbf{I}}
\def\Y{\mathbf{Y}}
\def\X{\mathbf{X}}
\def\G{\mathbf{G}}
\def\f{\mathbf{f}}
\def\z{\mathbf{z}}
\def\v{\mathbf{v}}
\def\m{\mathbf{m}}
\def\y{\mathbf{y}}
\def\d{\mathbf{d}}
\def\x{\mathbf{x}}
\def\bI{\mathbf{I}}
\def\y{\mathbf{y}}
\def\g{\mathbf{g}}
\def\w{\mathbf{w}}
\def\b{\mathbf{b}}
\def\a{\mathbf{a}}
\def\u{\mathbf{u}}
\def\q{\mathbf{q}}
\def\e{\mathbf{e}}
\def\s{\mathcal{S}}
\def\cc{\mathcal{C}}
\def\co{{\rm co}\,}
\def\tg{\tilde{g}}
\def\tx{\tilde{\x}}
\def\tg{\tilde{g}}
\def\tA{\tilde{\A}}
\def\bell{\boldsymbol{\ell}}
\def\bxi{\boldsymbol{\xi}}
\def\bpsi{\boldsymbol{\psi}}
\def\supmu{{\rm supp}\,\mu}
\def\supp{{\rm supp}\,}
\def\cd{\mathcal{C}_d}
\def\cok{\mathcal{C}_{\K}}
\def\cop{COP}
\def\vol{{\rm vol}\,}
\def\om{\mathbf{\Omega}}
\def\f{\mathscr{F}}
\def\ms{\mathcal{S}}
\def\det{{\rm det}}
\def\nab1{\nabla f^{-1}}

\title[]{Non-negative forms, 
volumes of sublevel sets, 
complete monotonicity and  moment matrices}

\author{Khazhgali Kozhasov and Jean B. Lasserre}

\address{Technische Universit\"at Braunschweig, Institut f\"ur Analysis und Algebra, Universit\"atsplatz 2, 38106 Braunschweig, Germany\medskip}

\email{k.kozhasov@tu-braunschweig.de}

\address{LAAS-CNRS and Institute of Mathematics\\
University of Toulouse\\
LAAS, 7 avenue du Colonel Roche\\
31077 Toulouse C\'edex 4, France\medskip}

\email{lasserre@laas.fr}

\thanks{J.B. Lasserre is supported by the AI Interdisciplinary Institute ANITI  funding through the french program ``Investing for the Future PI3A" under the grant agreement number ANR-19-PI3A-0004. He is also affiliated with  IPAL-CNRS laboratory, Singapore.}

\date{}
\begin{abstract}
Let $\mathcal{C}_{d,n}$ be the convex cone consisting of real $n$-variate degree $d$ forms that are strictly positive on $\mathbb{R}^n\setminus \{\mathbf{0}\}$. We prove that the Lebesgue volume of the sublevel set $\{g\leq 1\}$ of $g\in \mathcal{C}_{d,n}$ is a completely monotone function on $\mathcal{C}_{d,n}$ and investigate the related properties. Furthermore, we provide (partial) characterization of forms, whose sublevel sets have finite Lebesgue volume. Finally, we discover an interesting property of a centered Gaussian distribution, establishing a connection between the matrix of its degree $d$ moments and the quadratic form given by the inverse of its covariance matrix.

\end{abstract}

\maketitle

\vspace{-0.57cm}

\section{Introduction and main results}

We bring together various constructions and studies associated with real homogeneous polynomials, see \cite{AKU, KL2020, KMS, nonGaussian, parsimony, MSUZ, Mor, Nesterov2000, ScottSokal}. 
Our discoveries in Subsection \ref{sub:CM} enrich the existing connection \cite{KMS, MSUZ, ScottSokal} between \emph{real algebraic geometry} and \emph{the theory of completely monotone functions}. Theorem \ref{thm:moments} establishes a property of a \emph{centered Gaussian distribution} that we also interpret in the context of \emph{polynomial optimization} \cite{Nesterov2000}. In Subsection \ref{sub:L2L1} we solve a problem that contributes to a general study of \emph{extremal properties of homogeneous polynomials} (see \cite{AKU, KL2020, KT2022, parsimony}). Finally, the results stated in Subsection \ref{sub:volume} naturally complement investigations from \cite{KL2020, parsimony} about \emph{volumes of sublevel sets of non-negative homogeneous polynomials}.   
\smallskip 

Let $\H_{d,n}$ denote the space of real $n$-variate forms (homogeneous polynomials) of degree $d$. The space $\H_{d,n}$ is endowed with \emph{the Bombieri inner product} 
\begin{align}\label{eq:Bomb}
    \langle g, h\rangle\ =\ \sum_{\vert\alpha\vert=d} {d\choose \alpha}^{-1} g_{\alpha} h_{\alpha},\quad {d\choose\alpha}=\frac{d\mathrm{!}}{\alpha_1\mathrm{!}\cdots\alpha_n\mathrm{!}},
\end{align}
where  $g(\x)=\sum_{\vert \mathbf{\alpha}\vert=d} g_{\mathbf{\alpha}} \x^{\mathbf\alpha}$ and $h(\x)=\sum_{\vert \mathbf{\alpha}\vert=d} h_{\mathbf{\alpha}} \x^{\mathbf\alpha}$ are two forms written in the basis of monomials of degree $d$. 
For a form $g\in \H_{d,n}$ we consider its sublevel set \[\G=\{\x\in \R^n:g(\x)\leq 1\}.\]
We are  interested in \emph{the volume function} $f$ that to a given $g\in \H_{d,n}$ associates the Lebesgue volume $f(g)=\vol(\G)$ of its sublevel set.
The volume of $\G$ is infinite if $g$ takes negative values. 
Thus, it is natural to apply $f$ only to polynomials that are non-negative on $\R^n$. 
We call a form $g$ \emph{positive definite} (PD for short), if it is positive on $\R^n\setminus\{\boldsymbol{0}\}$. The sublevel set $\G\subset \R^n$ of a PD form is compact and hence it has a finite Lebesgue volume.
In this regard, we are concerned with the open convex cone $\mathcal{C}_{d,n}\subset \H_{d,n}$  of PD forms. 
We implicitly assume that the degree $d$ is even, as only in this case $\mathcal{C}_{d,n}$ is non-empty.
The closure $\overline{\mathcal{C}_{d,n}}$ of $\mathcal{C}_{d,n}$ with respect to the norm topology on $(\H_{d,n},\langle\cdot,\cdot\rangle)$ consists of forms that are non-negative on $\mathbb{R}^n$.

\subsection{Complete monotonicity of the volume function}\label{sub:CM}

The cone $\mathcal{C}_{d,n}$ is a natural domain of definition of the volume function
\begin{equation}\begin{aligned}\label{eq:volume_function}
    f: \mathcal{C}_{d,n}\ &\rightarrow\ \R,\\
    g\ &\mapsto\ \vol(\G)\ =\ \int_{\G}\textrm{d}\x,
\end{aligned}\end{equation}
which is strictly convex and admits the following integral representation
\begin{align}\label{eq:int}
   f(g)\ =\ \vol(\G)\ =\ \frac{1}{\Gamma\left(1+n/d\right)}\int_{\R^n} \exp(-g(\x))\,\textrm{d}\x,
\end{align}
see \cite[Thm. 2.2]{parsimony}.
In our first result we show that \eqref{eq:volume_function} is \emph{completely monotone},
that is, it is \emph{the Laplace transform} of some Borel measure on the closed dual cone $\mathcal{C}_{d,n}^{\,*}=\{L\in \H_{d,n}^*: L(g)\geq 0\ \forall g\in \mathcal{C}_{d,n}\}$ to $\mathcal{C}_{d,n}$.
To state it, let us recall (from, e.g., \cite[Thm. 19]{DIDIO2022126066}) that  $\mathcal{C}_{d,n}^{\,*}$ is the conic hull of the image of the \emph{Veronese map}
\begin{equation}\label{eq:Veronese}
\begin{aligned}
     \boldsymbol\Theta_{d,n}:\R^n\ &\rightarrow\ \mathcal{C}_{d,n}^{\,*}\subset \H_{d,n}^*,\\
     \boldsymbol{\ell}\ &\mapsto\ [g\mapsto g(\boldsymbol{\ell})].
\end{aligned}
\end{equation}
By \cite[Thm. 17.10]{schmudgen2017moment}, $\mathcal{C}^{\,*}_{d,n}$, also known as \emph{the moment cone}, consists of \emph{(truncated) moment functionals}, that is, such $L\in \H_{d,n}^*$ with $L(g)=\int_{\mathbb{S}^{n-1}} g(\z)\,\textrm{d}\nu(\z)$, $g\in \H_{d,n}$, for some measure $\nu$ supported on \emph{the unit sphere} $\mathbb{S}^{n-1}=\{\z\in \R^n\,:\, \z^{\mathsf T}\z=1\}$.
Then, an element $\boldsymbol\Theta_{d,n}(\ell)$, $\ell\in \mathbb{S}^{n-1}$, corresponds to \emph{the Dirac measure} at $\ell$.
An alternative perspective on $\mathcal{C}_{d,n}^{\,*}$ comes with the identification of $\H_{d,n}^*$ and $\H_{d,n}$ via the inner product \eqref{eq:Bomb}. 
Under this identification, one has $g(\boldsymbol{\ell})=\big\langle \big(\boldsymbol{\ell}^{\mathsf T} \cdot\big)^d, g\big\rangle$ (see, e.g., \cite[(19.6)]{schmudgen2017moment}) and, in particular, \eqref{eq:Veronese} sends a vector $\boldsymbol{\ell}\in \R^n$ to the $d$th power of a linear form, $\theta_{\boldsymbol\ell}=\big(\boldsymbol{\ell}^{\mathsf T}\cdot\big)^d\in \mathcal{C}_{d,n}^{\,*}\subset \H_{d,n}^*\simeq \H_{d,n}$.
\begin{thm}\label{thm:CM}
For $n\geq 2$ and any even $d\geq 2$ the volume function \eqref{eq:volume_function} admits an integral representation
\begin{align}\label{eq:int_rep}
f(g)\ =\ \int_{\mathcal{C}_{d,n}^{\,*}} \exp(-\langle \theta, g\rangle)\, \frac{\textrm{\normalfont{d}}\mu(\theta)}{\Gamma\left(1+n/d\right)},
\end{align}
where $\mu$ is the push-forward measure of the Lebesgue measure on $\R^n$ under the Veronese map \eqref{eq:Veronese}.
In particular, $f: \mathcal{C}_{d,n}\rightarrow \R$ is completely monotone and for all $k\in\N$ and $v_1,\dots, v_k\in \mathcal{C}_{d,n}$ we have that
\begin{align}\label{eq:der}
    (-1)^k D_{v_1}\dots D_{v_k} f(g)\ \geq\ 0,\quad g\in \mathcal{C}_{d,n},
\end{align}
where $D_v$ denotes the directional derivative along $v\in \mathcal{C}_{d,n}$.
\end{thm}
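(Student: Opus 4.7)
\medskip
\noindent\textbf{Proof proposal.} The plan is to derive the integral representation \eqref{eq:int_rep} directly from the representation \eqref{eq:int} given in \cite[Thm. 2.2]{parsimony} by rewriting the exponent as a pairing and then pushing the Lebesgue measure on $\R^n$ forward through the Veronese map. Once \eqref{eq:int_rep} is established, complete monotonicity and the sign conditions \eqref{eq:der} will follow from differentiation under the integral sign and the defining property of the dual cone $\mathcal{C}_{d,n}^{\,*}$.

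First I would identify $\H_{d,n}^*$ with $\H_{d,n}$ via the Bombieri inner product \eqref{eq:Bomb}, so that for every $\boldsymbol{\ell}\in \R^n$ one has $g(\boldsymbol{\ell})=\langle \theta_{\boldsymbol{\ell}}, g\rangle$ with $\theta_{\boldsymbol{\ell}}=\boldsymbol{\Theta}_{d,n}(\boldsymbol{\ell})=(\boldsymbol{\ell}^{\mathsf T}\cdot)^d\in \mathcal{C}_{d,n}^{\,*}$, as recorded in the excerpt (cf.\ \cite[(19.6)]{schmudgen2017moment}). Substituting this into \eqref{eq:int} yields
\begin{equation*}
f(g)\ =\ \frac{1}{\Gamma(1+n/d)}\int_{\R^n}\exp(-\langle\theta_{\boldsymbol{\ell}},g\rangle)\,\textrm{d}\boldsymbol{\ell}.
\end{equation*}
The change-of-variables formula for push-forwards, applied to $\boldsymbol{\Theta}_{d,n}:\R^n\to \mathcal{C}_{d,n}^{\,*}$, then rewrites this integral as an integral over $\mathcal{C}_{d,n}^{\,*}$ against $\mu=(\boldsymbol{\Theta}_{d,n})_{*}\lambda$, where $\lambda$ is the Lebesgue measure on $\R^n$. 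This is exactly \eqref{eq:int_rep}.

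Next, to obtain \eqref{eq:der} and the complete monotonicity claim, I would differentiate \eqref{eq:int_rep} formally under the integral sign to get
\begin{equation*}
(-1)^k D_{v_1}\cdots D_{v_k}f(g)\ =\ \int_{\mathcal{C}_{d,n}^{\,*}}\langle\theta,v_1\rangle\cdots\langle\theta,v_k\rangle\exp(-\langle\theta,g\rangle)\,\frac{\textrm{d}\mu(\theta)}{\Gamma(1+n/d)},
\end{equation*}
and then observe that each factor $\langle\theta,v_i\rangle$ is non-negative, because $\theta\in \mathcal{C}_{d,n}^{\,*}$ and $v_i\in \mathcal{C}_{d,n}$ (this is the very definition of the dual cone). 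The non-negativity of the resulting integrand yields \eqref{eq:der}, and having \eqref{eq:int_rep} as a Laplace transform of a non-negative Borel measure on the dual cone is exactly the definition of complete monotonicity on $\mathcal{C}_{d,n}$.

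The main technical obstacle is the justification of differentiation under the integral sign, since a priori the polynomial growth of the factors $\langle\theta,v_i\rangle$ (in the Veronese parametrization these are of degree $d$ in $\boldsymbol{\ell}$) has to be controlled by the exponential decay of $\exp(-\langle\theta,g\rangle)$. Since $\mathcal{C}_{d,n}$ is open, given $g\in \mathcal{C}_{d,n}$ I would pick a PD form $g_{0}\in \mathcal{C}_{d,n}$ and a neighborhood $U$ of $g$ such that $g'-g_{0}/2\in \mathcal{C}_{d,n}$ for every $g'\in U$. Pulling back to $\R^n$, this gives a uniform bound $\exp(-\langle\theta_{\boldsymbol{\ell}},g'\rangle)\leq \exp(-g_{0}(\boldsymbol{\ell})/2)$, and the PD property of $g_{0}$ ensures that $|\langle\theta_{\boldsymbol{\ell}},v_{1}\rangle\cdots\langle\theta_{\boldsymbol{\ell}},v_{k}\rangle|\exp(-g_{0}(\boldsymbol{\ell})/2)$ is Lebesgue integrable (polynomial times rapidly decaying exponential); this integrable dominating function legitimates the interchange of differentiation and integration via dominated convergence, completing the proof.
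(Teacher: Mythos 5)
Your proposal is correct and follows essentially the same route as the paper: the integral representation is obtained, exactly as there, by rewriting $g(\boldsymbol\ell)$ as $\langle\theta_{\boldsymbol\ell},g\rangle$ in \eqref{eq:int} and pushing the Lebesgue measure forward through the Veronese map, and the sign conditions \eqref{eq:der} then follow from differentiation under the integral sign together with non-negativity of the factors $\langle\theta,v_i\rangle$ (which in $\R^n$-coordinates is just $v_i(\boldsymbol\ell)\geq 0$, precisely what the paper's Proposition \ref{prop:derivative} uses). The only cosmetic difference is the order: the paper proves the derivative formula directly on $\R^n$ first (Proposition \ref{prop:derivative}) and then establishes \eqref{eq:int_rep} separately, while you derive \eqref{eq:der} from \eqref{eq:int_rep} on the dual cone; also note that the final sentence asserting the Laplace-transform form is ``exactly the definition'' of complete monotonicity should be attributed to the Bernstein--Hausdorff--Widder--Choquet theorem rather than the definition, though this does not affect your argument since you verify \eqref{eq:der} directly.
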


\begin{rem}
  Complete monotonicity of a differentiable function $f: C\rightarrow \R$ on an open cone $C\subset \R^N$ is normally defined in terms of conditions on $k$-fold directional derivatives of $f$ as in \eqref{eq:der}. 
Thanks to Bernstein-Hausdorff-Widder-Choquet  theorem, recalled in Section \ref{sec:CM}, this is equivalent to the possibility of representing $f$ as the Laplace transform of some Borel measure on the dual cone $C^*$. 
\end{rem}

Scott and Sokal \cite{ScottSokal} investigated complete monotonicity of negative powers of some combinatorially defined polynomials .
In \cite[Theorem 1.3]{ScottSokal} they characterized values of $s$ for which the function $G\mapsto (\det (G))^{-s}$, defined on the cone of positive definite matrices, is completely monotone.  
The volume function $f(g)$ associated to a quadratic form $g(\x)=\x^{\mathsf T} G \x$ is proportional to $(\det(G))^{-1/2}$, see Proposition \ref{prop:2d=2}. 
Thus, in the case $d=2$ complete monotonicity of the volume function \eqref{eq:volume_function} also follows from the mentioned characterization from \cite{ScottSokal}.
The results from \cite{ScottSokal} along with exponential families in algebraic statistics \cite{expo} motivated Micha\l{}ek, Sturmfels and the first author of the present work to investigate \cite{KMS} complete monotonicity of negative powers $g^{-s}$ of \emph{hyperbolic polynomials}. 
A particular focus in their study was given to  elementary symmetric polynomials and to products of linear forms. 

In physics one often considers exponential probability density functions of the form  $\frac{1}{Z_g}\exp(-g(\x))$, where $Z_g=\int_{\R^n} \exp(-g(\x))\,\textrm{d}\x$ is the normalization constant, also known as \emph{the partition function}.
It is in general difficult to find closed-form expressions for $Z_g$ (respectively, for \eqref{eq:int}).
As we see in Proposition \ref{prop:2d=2}, it is possible to write $f(g)$ in terms of the discriminant of a quadratic form $g\in \mathcal{C}_{2,n}$, which is also equal to the determinant of 
the real symmetric matrix associated to $g$.
In \cite{Mor} Morozov and Shakirov develop this line further,
calling integrals in \eqref{eq:int} \emph{integral discriminants} and writing down their expressions in terms of $\textrm{SL}(n,\R)$-invariants of $g\in \H_{d,n}$.
Indeed, the standard action of \emph{the special linear group} on the space $\H_{d,n}$ preserves the integral \eqref{eq:int} and as a consequence the integral discriminant depends on $g$ only through the invariants of the action. 
For example, for $n=2$ and $d=4$ an expression for \eqref{eq:int} in terms of the $\textrm{SL}(2,\R)$-invariants is given in \cite[Section 5.1]{Mor}.

One can also generalize the above setting as explained in  the following remark.
\begin{rem}
For a non-negative form $h\in \overline{\mathcal{C}_{e,n}}$ of degree $e\geq 0$ consider a function
\begin{equation}\label{eq:CMgen}
\begin{aligned}
    f_h: \mathcal{C}_{d,n}&\rightarrow \R,\\
    g &\mapsto \int_{\G} h(\x)\,\textrm{d}\x,
\end{aligned}
\end{equation}
from which we retrieve the volume function \eqref{eq:volume_function} by setting $h=1$.
In Section \ref{sec:CM} we prove a generalization of Theorem \ref{thm:CM}, showing that \eqref{eq:CMgen} is completely monotone. 
\end{rem}
Along with proving complete monotonicity of the volume function \eqref{eq:volume_function} (and its generalization defined in the above remark), in Section \ref{sec:CM} we also discuss related properties as well as prove a version of Theorem \ref{thm:CM} for \emph{sum of squares forms}.

\subsection{Moment matrices of central Gaussian vectors and sums of squares}\label{sub:moments}

A form $g\in \H_{d,n}$ is called \emph{a sum of squares} (or \textrm{SOS}), if $g(\x)=\sum_{j=1}^r h_j(\x)$ for some forms $h_1,\dots, h_r\in \H_{d/2,n}$ of degree $d/2$.
 Equivalently, one can write $g$ as
\begin{align}\label{eq:SOS}
  g(\x)\ =\ \m_{d/2}(\x)^{\mathsf T} \,G\, \m_{d/2}(\x),\quad \x\in \R^n, 
\end{align}
where $\m_{d/2}(\x)=(\x^{\boldsymbol\alpha})_{\vert \boldsymbol\alpha\vert =d/2}$ is the column-vector of monomials\footnote{From now on we fix an order on the set of monomials of a given degree.} of degree $d/2$ and $G\in \textrm{PD}_M$ is a positive semi-definite matrix of size $M\times M$, $M={d/2+n-1 \choose n-1}$ (called \emph{a Gram matrix} of $g$). 
Below by $\textrm{PD}_M$ and $\textrm{PSD}_M$ we denote the convex cone of positive definite, respectively positive semi-definite, $M\times M$ real symmetric matrices.
By definition, sums of squares are non-negative, and, if $g$ is given by \eqref{eq:SOS} with a positive definite matrix $G$, then the form $g\in \mathcal{C}_{d,n}$ is also positive definite.
\begin{rem}
  It is well-known that not every non-negative form is a sum of squares, see \cite{Rez1996}. However, by a celebrated result of Artin \cite{Artin}, after multiplying a non-negative form by a suitable sum of squares form, one obtains a sum of squares.
  Moreover, by a result \cite[Thm. 3.12]{Reznick1995UniformDI} of Reznick, if a form $g\in \mathcal{C}_{d,n}$ is positive definite, there exists $s\in \mathbb{N}$ so that $\Vert \x\Vert^{2s} g\in\mathcal{C}_{d+2s,n}$ is a sum of squares, where $\Vert \x\Vert^2=\x^{\mathsf T}\x=\sum_{i=1}^n x_i^2$ is the Euclidean norm of $\x\in \R^n$.
  In fact, by the same result, $\Vert \x\Vert^{2s}g$ is even a sum of $(d+2s)$-th powers of linear forms and hence $\Vert \x\Vert^{2s} g\in \boldsymbol{\Theta}_{d+2s, n}(\R^n)$ is identified with a point of the moment cone $\mathcal{C}_{d+2s,n}^{\,*}$.
  Thus, sums of squares constitute an important subclass of the class of non-negative forms.
  Furthermore, with the representation \eqref{eq:SOS} for sums of squares, one can approach \emph{polynomial optimization problems} via \emph{semi-definite programming}, see \cite{lasserre_2015}.
\end{rem}

A centered multivariate normal distribution is defined by its covariance matrix $Q^{-1}$, with the probability density function given by 
\begin{align}\label{eq:Sigma} 
p^{\,}_{Q}(\y)\ =\ \frac{\sqrt{\det(Q)}}{\sqrt{(2\pi)^n}}\exp\left(-\frac{\y^{\mathsf T}Q \y}{2}\right),\quad \y\in \R^n.
\end{align}
The covariance matrix equals \emph{the Hankel matrix} of moments of $p^{\,}_Q$ of degree $2$, i.e.,
\begin{align}\label{eq:d=2case}
    Q^{-1}\ =\ \M_2[Q]\ =\ \int_{\R^n} \y\y^{\mathsf T} p^{\,}_{Q}(\y)\,\textrm{d}\y\ =\ \left( \int_{\R^n} y_iy_j p^{\,}_{Q}(\y)\,\textrm{d}\y\right).
\end{align}
Motivated by this property of Gaussian distributions, in \cite[$2.5$]{nonGaussian} the second author of the present work considers \emph{the Gaussian-like density} 
\begin{align}\label{eq:P_G}
P_G(\y)\ =\ \frac{\exp\left(-k\,\m_{d/2}(\y)^{\mathsf T} G\m_{d/2}(\y)\right)}{\int_{\R^n}\exp\left(-k\,\m_{d/2}(\z)^{\mathsf T} G\m_{d/2}(\z)\right)\textrm{d}\z},\quad \y\in \R^n,
\end{align}
associated to a positive definite \textrm{SOS} form $g(\x)=\m_{d/2}(\x)G\m_{d/2}(\x)$, $G\in \textrm{PD}_M$, and  \emph{the Hankel-like matrix}
\[
\mathcal{M}_d[G]\ =\ \int_{\R^n}\m_{d/2}(\y)\m_{d/2}(\y)^{\mathsf T} P_G(\y)\,\textrm{d}\y
\]
of degree $d$ moments of \eqref{eq:P_G}, where $(2k)^{-1}={d/2+n-1\choose n}$. When $d=2$, one has $k=1/2$ and \eqref{eq:P_G} is the Gaussian density \eqref{eq:Sigma}, whose matrix $\mathcal{M}_2[G]=\M_2[Q]$ of degree $2$ moments is the inverse of $G=Q\in\textrm{PD}_n$, see \eqref{eq:d=2case}. When $d>2$ is higher, Lasserre shows in \cite[Lemma 4]{nonGaussian} that $\mathcal{M}_d[G]=G^{-1}$ (for $G\in \textrm{PD}_M$) if and only if $G$ is a critical point of the function
\begin{align}\label{eq:function}
G\in \textrm{PD}_M\ \mapsto\ (\det (G))^k\int_{\R^n} \exp\left(-k\m_{d/2}(\x)^{\mathsf T} G \m_{d/2}(\x)\right)\textrm{d}\x.
\end{align}
Theorem \ref{thm:moments} below implies that forms $g(\x)=(\x^{\mathsf T} Q\x)^{d/2}$, $Q\in \textrm{PD}_n$, fulfil this condition.
Before we state it, for $Q\in \textrm{PD}_n$ and $d\geq 2$ consider the density
\[
p^{(d)}_{\,Q}(\y)\, =\, \frac{\sqrt{\det(Q)}}{\sigma_d^n\sqrt{(2\pi)^n}}\exp\left(-\frac{\y^{\mathsf T}Q \y}{2\sigma_d^2}\right),\quad \sigma_d\, =\, \left(\frac{{d/2+n-1\choose n-1}}{\sqrt{2}^d\prod_{i=0}^{d/2-1} \big(\frac{n}{2}+i\big)}\right)^{1/d}\hspace{-0.2cm},
\]
of a centered Gaussian distribution with covariance matrix $\sigma_d^2Q^{-1}$, and let
\begin{align}\label{eq:M_d_def}
\M_d[Q]\ =\ \int_{\R^n} \m_{d/2}(\y)\m_{d/2}(\y)^{\mathsf T} p^{(d)}_{\,Q}(\y)\,\textrm{\normalfont{d}}\y\ =\ \left(\int_{\R^n} \y^{\alpha+\beta} p^{(d)}_{\,Q}(\y)\,\textrm{\normalfont{d}}\y\right)
\end{align}
be the Hankel-like matrix of moments of $p^{(d)}_{\,Q}$ of degree $d$.
For $d=2$ one has $\sigma_2=1$ and hence we recover $p^{\,}_{Q}=p^{(2)}_{\,Q}$ given by \eqref{eq:Sigma}. 

\begin{thm}\label{thm:moments}
Let $Q\in \textrm{\normalfont{PD}}_n$ be any positive definite matrix and let $d\geq 2$ be even. Then $\M_d[Q]^{-1}$ is a Gram matrix of the \textrm{\normalfont{SOS}} form $(\x^{\mathsf T} Q\x)^{d/2}$, that is,
  \begin{align}\label{eq:claim}
    \m_{d/2}(\x)^{\mathsf T}\M_d[Q]^{-1}\m_{d/2}(\x)\ =\ \left(\x^{\mathsf T} Q\x\right)^{d/2}.
    \end{align}
    Furthermore, we have $\M_d[Q]=\mathcal{M}_d[G]$, where $\mathcal{M}_d[G]$ is the matrix of degree $d$ moments of the Gaussian-like density \eqref{eq:P_G} associated to $g(\x)=(\x^{\mathsf T} Q\x)^{d/2}$.
\end{thm}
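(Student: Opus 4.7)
The plan is to reduce both assertions to properties of rotationally invariant distributions via the change of variables $\u=Q^{1/2}\y$ aligning $Q$ with the identity. I first address the Gram-matrix claim \eqref{eq:claim}. If $T_A$ denotes the matrix of the substitution $p(\x)\mapsto p(A\x)$ on $\H_{d/2,n}$ in the monomial basis (so that $T_A\,\m_{d/2}(\x)=\m_{d/2}(A\x)$ and $T_AT_B=T_{AB}$), then the change of variables $\y=Q^{-1/2}\z$ in \eqref{eq:M_d_def} produces at once the congruence $\M_d[Q]=T_{Q^{-1/2}}\,\M_d[I]\,T_{Q^{-1/2}}^{\mathsf T}$, and hence
\[\m_{d/2}(\x)^{\mathsf T}\M_d[Q]^{-1}\m_{d/2}(\x)\ =\ \m_{d/2}(Q^{1/2}\x)^{\mathsf T}\M_d[I]^{-1}\m_{d/2}(Q^{1/2}\x).\]
It is therefore enough to prove \eqref{eq:claim} for $Q=I$, where it reads $\m_{d/2}(\x)^{\mathsf T}\M_d[I]^{-1}\m_{d/2}(\x)=(\x^{\mathsf T}\x)^{d/2}$.

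Because $p^{(d)}_{\,I}$ is $O(n)$-invariant, the same congruence applied with any $O\in O(n)$ gives $\M_d[I]=T_O\,\M_d[I]\,T_O^{\mathsf T}$, so that $\m_{d/2}(\x)^{\mathsf T}\M_d[I]^{-1}\m_{d/2}(\x)$ is $O(n)$-invariant; being also a form of degree $d$, it must equal $c_{d,n}(\x^{\mathsf T}\x)^{d/2}$ for a scalar $c_{d,n}$. To pin down $c_{d,n}=1$ I would integrate this identity against $p^{(d)}_{\,I}(\x)\,\textrm{d}\x$: by cyclicity of the trace the left-hand side equals $\textrm{tr}\big(\M_d[I]^{-1}\M_d[I]\big)=\dim\H_{d/2,n}=\binom{d/2+n-1}{n-1}$, while the right-hand side equals $c_{d,n}\,\mathbb{E}_{N(0,\sigma_d^2 I)}[|\x|^d]=c_{d,n}\,\sigma_d^d\cdot 2^{d/2}\prod_{i=0}^{d/2-1}(n/2+i)$. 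The very definition of $\sigma_d$ in the statement is arranged so that this last product equals $c_{d,n}\binom{d/2+n-1}{n-1}$, and equality forces $c_{d,n}=1$.

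For the second assertion $\M_d[Q]=\mathcal{M}_d[G]$ the key observation is that under $\u=Q^{1/2}\y$ both densities become rotationally invariant in $\u$: $p^{(d)}_{\,Q}$ transforms into $N(0,\sigma_d^2 I)$, while $P_G\propto\exp(-k(\y^{\mathsf T} Q\y)^{d/2})$ (using $\m_{d/2}(\y)^{\mathsf T} G\,\m_{d/2}(\y)=(\y^{\mathsf T} Q\y)^{d/2}$) transforms into a density proportional to $\exp(-k|\u|^d)$. For any rotationally invariant density on $\R^n$, the spherical-radial decomposition yields
\[\mathbb{E}[\u^\gamma]\ =\ \frac{\mathbb{E}[|\u|^d]}{|\mathbb{S}^{n-1}|}\int_{\mathbb{S}^{n-1}}\omega^\gamma\,\textrm{d}\sigma(\omega)\]
for every $|\gamma|=d$, so if the two $d$-th radial moments coincide then so do all their degree-$d$ moments. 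A short gamma-integral (for $\exp(-k|\u|^d)$) and a $\chi^2_n$-moment (for the Gaussian) turn both radial moments into $\binom{d/2+n-1}{n-1}$, again by the chosen value of $\sigma_d$. Pulling back through $\y=Q^{-1/2}\u$ then gives $\M_d[Q]=\mathcal{M}_d[G]$.

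The real technical point of the theorem is the combinatorial identity hidden in the definition of $\sigma_d$: it is precisely what makes $c_{d,n}=1$ (equivalently, what aligns the two radial moments in the second part with $\dim\H_{d/2,n}$). Once this single identity is verified, both assertions of Theorem \ref{thm:moments} follow from the $O(n)$-symmetry of the Gaussian together with the cyclicity of the trace and the spherical-radial decomposition of rotationally invariant integrals.
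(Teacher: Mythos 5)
Your proof is correct and takes a genuinely shorter route than the paper's. The essential simplification is in the reduction to $Q=I$: the paper first passes to the Bombieri-normalized monomial vector $\hat\m_{d/2}$, exploits that $O(n)$ acts on it by orthogonal $M\times M$ matrices, and then performs two separate reductions (conjugation by an orthogonal matrix to diagonalize $Q$, followed by a coordinate-wise rescaling to reach $Q=\mathrm{Id}$). You instead use the substitution matrix $T_A$ directly for an \emph{arbitrary} invertible $A$, obtaining in one step the congruence $\M_d[Q]=T_{Q^{-1/2}}\,\M_d[I]\,T_{Q^{-1/2}}^{\mathsf T}$ and hence $\m_{d/2}(\x)^{\mathsf T}\M_d[Q]^{-1}\m_{d/2}(\x)=\m_{d/2}(Q^{1/2}\x)^{\mathsf T}\M_d[I]^{-1}\m_{d/2}(Q^{1/2}\x)$. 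This bypasses the normalized-monomial detour entirely, since $T_A^{-1}=T_{A^{-1}}$ holds regardless of whether $T_A$ is orthogonal; orthogonality (which is why the paper normalizes) is simply not needed for the congruence manipulation. From $Q=I$ onward the two proofs coincide: $O(n)$-invariance forces the form to be a multiple of $(\x^{\mathsf T}\x)^{d/2}$, and the trace identity $\mathrm{Tr}(\M_d[I]^{-1}\M_d[I])=M$ fixes the constant to $1$, with the specific value of $\sigma_d$ being exactly what makes the Gaussian $d$-th radial moment equal $\binom{d/2+n-1}{n-1}$. For the second claim, the paper carries out an explicit chain of changes of variables and Gamma-function evaluations to transform $\M_d[Q]$ into $\mathcal{M}_d[G]$; your spherical-radial argument (pull back by $\u=Q^{1/2}\y$, observe both pushforward densities $p^{(d)}_I$ and $\propto\exp(-k|\u|^d)$ are rotationally invariant, and note that rotationally invariant densities with the same $d$-th radial moment have identical degree-$d$ moment matrices) reaches the same conclusion more conceptually. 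You do need to verify $\mathbb{E}[|\u|^d]=\frac{n}{dk}=\binom{d/2+n-1}{n-1}$ for the $\exp(-k|\u|^d)$ density using $(2k)^{-1}=\binom{d/2+n-1}{n}$, but that is a one-line Gamma computation. In short: same underlying ideas ($O(n)$-invariance, trace normalization, moment matching), but you streamline both reduction steps; the paper's version is somewhat more hands-on and self-contained.
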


\begin{rem}
By \cite[Lemma 4]{nonGaussian}, for any $Q\in \textrm{\normalfont{PD}}_n$ the matrix $\mathcal{M}_d[Q]^{-1}\in\textrm{\normalfont{PD}}_M$ is a critical point of the function \eqref{eq:function}. It is an interesting open question whether there exist other \textrm{\normalfont{SOS}} forms with this property.
\end{rem}

Sums of squares of degree $d$ form a closed convex subcone (called \emph{the \textrm{\normalfont{SOS}} cone}) $\Sigma_{d,n}\subseteq \overline{\mathcal{C}_{d,n}}$ of the cone of non-negative homogeneous polynomials. Its  dual cone $\Sigma_{d,n}^*=\{L\in \H^*_{d,n}\,:\,L(g)\geq 0\ \forall\, g\in \Sigma_{d,n}\}$ contains  $\mathcal{C}_{d,n}^{\,*}$ and is known as \emph{the pseudo-moment cone}. 
For any \emph{(truncated) pseudo-moment functional} $L\in \Sigma_{d,n}^*$ let  
\begin{align}\label{eq:pseudo-moment_matrix}
    \M_d(L)\ =\ L\left(\m_{d/2}(\y) \m_{d/2}(\y)^{\mathsf T}\right)\ =\ \left(L(\y^{\alpha+\beta})\right)_{\vert \alpha\vert=\vert\beta\vert=d/2}
\end{align}
be the $M\times M$ \emph{Hankel-like matrix} of pseudo-moments of degree $d$ associated to $L$.
One has  $\M_d(L)\in \textrm{PSD}_M$, as $\h^{\mathsf T} \M_d(L) \h=L(\h^{\mathsf T}\m_{d/2}(\y)\m_{d/2}(\y)^{\mathsf T}\h)=L(h^2)\geq 0$ holds for any form $h\in \H_{d/2,n}$ whose coefficients in the basis of monomials form a column vector $\h=(h_\alpha)_{\vert\alpha\vert=d/2}$.
Conversely, any matrix $\M_d\in \textrm{PSD}_M$ that satisfies $(\M_d)_{\alpha\beta}=(\M_d)_{\alpha'\beta'}$ whenever $\alpha+\beta=\alpha'+\beta'$, $\vert \alpha\vert=\vert \beta\vert=\vert\alpha'\vert=\vert\beta'\vert=d/2$, is of the form \eqref{eq:pseudo-moment_matrix} for $L\in \H_{d,n}^*$ defined by $L(\y^{\alpha+\beta})=(\M_d)_{\alpha\beta}$, $\vert\alpha\vert=\vert\beta\vert=d/2$.
Furthermore, for a linear functional $L\in \Sigma_{d,n}^*$ that lies in the interior of the pseudo-moment cone, the associated pseudo-moment matrix $\M_d(L)\in \textrm{PD}_{M}$ is positive definite and hence invertible.
If $L\in \mathcal{C}_{d,n}^{\,*}$ is even a moment functional, that is, 
\begin{align}\label{eq:moment_func}
   L(g)\ =\ \int_{\mathbb{S}^{n-1}} g(\z)\,\textrm{d}\nu(\z),\quad g\in \H_{d,n},
\end{align}
for some measure $\nu$ supported on $\mathbb{S}^{n-1}$, then the entries of the matrix $\M_d(L)$ are moments of $\nu$ of degree $d$.

By a result of Nesterov (see \cite[Thm. $2$]{Nesterov2000}), a \textrm{SOS} form $g\in \Sigma_{d,n}$ belongs to the interior of the cone $\Sigma_{d,n}$ if and only if there is some linear functional $L\in \Sigma_{d,n}^*$ in the interior of the pseudo-moment cone such that $g(\x)=\m_{d/2}(\x)^{\mathsf T}\,\M_d(L)^{-1}\m_{d/2}(\x)$. Moreover, $L\in \Sigma_{d,n}^*$ satisfying this condition is unique.
\begin{rem}
Theorem \ref{thm:moments} delivers Nesterov's characterization for a power of a quadratic form $g(\x)=(\x^{\mathsf T} Q\x)^{d/2}$, associated to a positive definite matrix $Q\in\textrm{\normalfont{PD}}_n$.
It implies that the unique $\M_d(L)\in \textrm{\normalfont{PD}}_M$ is the matrix of degree $d$ moments of the centered Gaussian vector in $\R^n$, whose covariance matrix is proportional to $Q^{-1}$.
\end{rem}

By integrating out the radial part of the Gaussian measure from Theorem \ref{thm:moments}, one also obtains a measure on $\mathbb{S}^{n-1}$, whose degree $d$ moments comprise $\M_d(L)$. 
\begin{cor}\label{cor:Nesterov}
Let $Q\in \mathrm{\normalfont{PD}}_n$ be a positive definite matrix and let $L$ be the moment functional \eqref{eq:moment_func}, whose associated measure $\nu$ on $\mathbb{S}^{n-1}$ is defined by
\begin{align*}
    \textrm{\normalfont{d}}\nu(\z)\ =\ {d/2+n-1\choose n-1}\frac{\sqrt{\det(Q)}}{\sqrt{\z^{\mathsf T} Q \z}^{\,d+n}}\frac{\textrm{\normalfont{d}}\mathbb{S}^{n-1}(\z)}{\vol(\mathbb{S}^{n-1})},\quad \z\in \mathbb{S}^{n-1},
\end{align*}
where $\textrm{\normalfont{d}}\mathbb{S}^{n-1}$ is the standard Riemannian measure on the Euclidean sphere with $\vol(\mathbb{S}^{n-1})=2\sqrt{\pi}^n/\,\Gamma\left(n/2\right)$ being its total volume.
Then,  $\M_d(L)^{-1}$ is a Gram matrix of the \textrm{\normalfont{SOS}} form $(\x^{\mathsf T} Q\x)^{d/2}$, that is,
  \begin{align*}    \m_{d/2}(\x)^{\mathsf T}\M_d(L)^{-1}\m_{d/2}(\x)\ =\ \left(\x^{\mathsf T} Q\x\right)^{d/2}.
    \end{align*}
\end{cor}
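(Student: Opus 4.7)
The plan is to deduce Corollary~\ref{cor:Nesterov} directly from Theorem~\ref{thm:moments} by pushing the Gaussian measure defining $\M_d[Q]$ to the unit sphere through polar coordinates. Since Theorem~\ref{thm:moments} already guarantees the Gram factorization $\m_{d/2}(\x)^{\mathsf T}\M_d[Q]^{-1}\m_{d/2}(\x)=(\x^{\mathsf T} Q\x)^{d/2}$, it is enough to identify $\M_d[Q]$ with the pseudo-moment matrix $\M_d(L)$ of the spherical moment functional $L$ described in the statement; that is, to verify for every multi-index $\gamma$ with $|\gamma|=d$ the scalar identity
\begin{align*}
\int_{\R^n}\y^{\gamma}\,p^{(d)}_{\,Q}(\y)\,\textrm{d}\y\ =\ \int_{\mathbb{S}^{n-1}}\z^{\gamma}\,\textrm{d}\nu(\z).
\end{align*}

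To do this I would write $\y=r\z$ with $r>0$ and $\z\in\mathbb{S}^{n-1}$, so that $\textrm{d}\y=r^{n-1}\,\textrm{d}r\,\textrm{d}\mathbb{S}^{n-1}(\z)$ and $\y^{\gamma}=r^d\z^{\gamma}$. Fixing $\z$ and using the standard gamma identity
\begin{align*}
\int_0^\infty r^{d+n-1}\exp\!\left(-\tfrac{r^2\,\z^{\mathsf T} Q\z}{2\sigma_d^2}\right)\textrm{d}r\ =\ \tfrac{1}{2}\,\Gamma\!\left(\tfrac{d+n}{2}\right)\!\left(\tfrac{2\sigma_d^2}{\z^{\mathsf T} Q\z}\right)^{(d+n)/2},
\end{align*}
the left-hand side of the target identity becomes a scalar multiple of $\int_{\mathbb{S}^{n-1}}\z^{\gamma}(\z^{\mathsf T} Q\z)^{-(d+n)/2}\,\textrm{d}\mathbb{S}^{n-1}(\z)$, which is precisely the angular structure appearing in the formula for $\textrm{d}\nu(\z)$.

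The one nontrivial step is checking that the overall constants produced by the radial integration and the Gaussian normalization $\sqrt{\det Q}/(\sigma_d^n\sqrt{(2\pi)^n})$ agree with the prefactor $\binom{d/2+n-1}{n-1}\sqrt{\det Q}/\vol(\mathbb{S}^{n-1})$ appearing in $\textrm{d}\nu$. Using $\vol(\mathbb{S}^{n-1})=2\pi^{n/2}/\Gamma(n/2)$, this reduces to the single identity
\begin{align*}
\sigma_d^d\ =\ \frac{\binom{d/2+n-1}{n-1}\,\Gamma(n/2)}{2^{d/2}\,\Gamma((d+n)/2)},
\end{align*}
which follows from the definition of $\sigma_d$ together with $\prod_{i=0}^{d/2-1}\!\bigl(\tfrac{n}{2}+i\bigr)=\Gamma((d+n)/2)/\Gamma(n/2)$. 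This constant bookkeeping — matching powers of $2$, $\pi$, $\sigma_d$, and two gamma factors — is essentially the only obstacle; once it goes through, $\M_d(L)=\M_d[Q]$ and the Gram identity is inherited verbatim from Theorem~\ref{thm:moments}.
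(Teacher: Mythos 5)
Your proposal is correct and matches the paper's proof: both reduce the corollary to identifying $\M_d[Q]$ with $\M_d(L)$ via polar coordinates, evaluating the radial Gaussian integral with the Gamma function, and verifying the resulting constant against the prefactor of $\textrm{d}\nu$ (the paper performs the radial integration via the substitution $t=\tfrac{\z^{\mathsf T}Q\z}{2\sigma_d^2}r^2$, which is the same computation you write down directly). The constant identity $\sigma_d^d=\binom{d/2+n-1}{n-1}\Gamma(n/2)/\bigl(2^{d/2}\,\Gamma((d+n)/2)\bigr)$ you isolate is correct and indeed the only bookkeeping required.
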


Theorem \ref{thm:moments} together with Corollary \ref{cor:Nesterov} are proven in Section \ref{sec:moments}.

\subsection{Comparison of $L^1$- and $L^2$-norms associated to a PD form}\label{sub:L2L1}

Given a positive definite form $g\in \mathcal{C}_{d,n}$, let us denote by $\mu_g$ the Lebesgue measure restricted to the sublevel set $\G\subset \R^n$ of $g$. 
Integrating $\mu_g$ against any form $h\in \H_{e,n}$ of degree $e$, one obtains by \cite[(2.1)]{parsimony} that
\[\mu_g(h)\ =\ \int_{\G} h(\x)\,\textrm{d}\x\ =\ \frac{1}{\Gamma\left(1+(n+e)/d\right)}\int_{\R^n} h(\x) \exp(-g(\x))\,\textrm{d}\x.\]
In this perspective, $\mu_g$ can be identified with a measure with exponential density $\exp(-g(\x))$ with respect to the Lebesgue measure on $\R^n$. 
We consider $L^1$- and $L^2$-norms associated to the measure $\mu_g$,
\begin{align}
    \Vert h\Vert_{L^1(\mu_g)}\ =\ \int_{\G} \vert h(\x)\vert \,\textrm{d}\x,\quad \Vert h\Vert_{L^2(\mu_g)}\ =\ \left(\int_{\G} \vert h(\x)\vert^2\,\textrm{d}\x\right)^{1/2},
\end{align}
where $h$ is any Lebesgue measurable function on $\G$.
We show that $g$ minimizes the ratio of $L^2$- and $L^1$-norms over all non-zero $h\in \H_{d,n}$.

\begin{thm}\label{thm:L2L1}
Let $g\in \mathcal{C}_{d,n}$. Then 
\[ g\ =\ \arg\min_{h\in \H_{d,n}}\left\{\Vert h\Vert_{L^2(\mu_g)}\,:\,\Vert h\Vert_{L^1(\mu_g)} = \Vert g\Vert_{L^1(\mu_g)}\right\} \]
or, equivalently,
\[ \frac{\Vert g\Vert_{L^2(\mu_g)}}{\Vert g\Vert_{L^1(\mu_g)}}\ =\ \min\left\{\frac{\Vert h\Vert_{L^2(\mu_g)}}{\Vert h\Vert_{L^1(\mu_g)}}\,:\, h\in \H_{d,n}\setminus \{0\}\right\}.\]
Moreover, up to a factor, $g$ is the unique minimizer of these optimization problems.
\end{thm}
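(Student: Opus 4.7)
The plan is to reduce both $L^1$ and $L^2$ norms to weighted integrals over the unit sphere $\mathbb{S}^{n-1}$ and then recognize the desired inequality as a direct application of the Cauchy--Schwarz inequality.

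First, I pass to polar coordinates $\x = r\omega$, $r\geq 0$, $\omega\in \mathbb{S}^{n-1}$, using that $g,h$ are $d$-homogeneous with $g(\omega)>0$ on $\mathbb{S}^{n-1}$. The sublevel set in the ray direction $\omega$ is the interval $0\leq r\leq g(\omega)^{-1/d}$, so performing the radial integration gives
\[
    \|h\|_{L^1(\mu_g)}\ =\ \frac{1}{d+n}\int_{\mathbb{S}^{n-1}}\frac{|h(\omega)|}{g(\omega)^{(d+n)/d}}\,\textrm{d}\mathbb{S}^{n-1}(\omega),
\]
\[
    \|h\|_{L^2(\mu_g)}^2\ =\ \frac{1}{2d+n}\int_{\mathbb{S}^{n-1}}\frac{h(\omega)^2}{g(\omega)^{(2d+n)/d}}\,\textrm{d}\mathbb{S}^{n-1}(\omega).
\]
It is convenient to introduce the finite positive Borel measure $\textrm{d}\sigma(\omega):=g(\omega)^{-(d+n)/d}\,\textrm{d}\mathbb{S}^{n-1}(\omega)$, with respect to which $(d+n)\|h\|_{L^1(\mu_g)}=\int |h|\,\textrm{d}\sigma$ and $(2d+n)\|h\|_{L^2(\mu_g)}^2=\int h^2/g\,\textrm{d}\sigma$ (both integrals over $\mathbb{S}^{n-1}$). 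Notice also that $\int g\,\textrm{d}\sigma = (d+n)\|g\|_{L^1(\mu_g)}$ and $\int g\cdot g^{-1}\,\textrm{d}\sigma/(2d+n)=\|g\|_{L^2(\mu_g)}^2\cdot$ wait, more precisely $(2d+n)\|g\|_{L^2(\mu_g)}^2=\int g\,\textrm{d}\sigma$.

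Now Cauchy--Schwarz applied to the functions $\sqrt{g}$ and $|h|/\sqrt{g}$ against the measure $\textrm{d}\sigma$ gives
\[
    \left(\int_{\mathbb{S}^{n-1}}|h|\,\textrm{d}\sigma\right)^{2}\ \leq\ \int_{\mathbb{S}^{n-1}} g\,\textrm{d}\sigma\ \cdot\ \int_{\mathbb{S}^{n-1}}\frac{h^2}{g}\,\textrm{d}\sigma,
\]
which, after substituting the expressions above, is precisely the inequality $\|h\|_{L^1(\mu_g)}^2 \cdot \|g\|_{L^2(\mu_g)}^2 \leq \|g\|_{L^1(\mu_g)}\cdot \|g\|_{L^1(\mu_g)}\cdot \|h\|_{L^2(\mu_g)}^2$ (after keeping track of the factors $d+n$ and $2d+n$), i.e.\ $\|g\|_{L^2(\mu_g)}/\|g\|_{L^1(\mu_g)} \leq \|h\|_{L^2(\mu_g)}/\|h\|_{L^1(\mu_g)}$, which is what was claimed.

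For the uniqueness part, equality in Cauchy--Schwarz forces $|h|/\sqrt{g}=c\sqrt{g}$ $\sigma$-a.e., so by continuity $|h(\omega)|=c\,g(\omega)$ for every $\omega\in \mathbb{S}^{n-1}$. Since $g>0$ on $\mathbb{S}^{n-1}$, the continuous function $h$ cannot change sign on the connected sphere (assuming $n\geq 2$; the case $n=1$ is trivial), so $h=\pm cg$ on $\mathbb{S}^{n-1}$; $d$-homogeneity then extends this to $\R^n$, giving $h=\lambda g$ for some $\lambda\in \R\setminus\{0\}$. The normalization $\|h\|_{L^1(\mu_g)}=\|g\|_{L^1(\mu_g)}$ forces $|\lambda|=1$, explaining the ``up to a factor'' in the statement.

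The only subtle point is the equality analysis: one must rule out the possibility that $h$ oscillates in sign while still satisfying $|h|=cg$, which is handled by the fact that $h$ is a polynomial (hence continuous) and $g$ is strictly positive away from the origin, so the zero set of $h\mp cg$ cannot cover all of $\R^n$ unless one of these polynomials vanishes identically. No real obstacle is expected in carrying out the plan; the main care is in correctly tracking the normalization constants in the polar decomposition.
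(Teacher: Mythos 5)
Your proof is correct, and it takes a genuinely different route from the paper. The paper's argument avoids polar coordinates altogether: it invokes the identity
\[
\int_{\G} \vert h(\x)\vert\, g(\x)\,\mathrm{d}\x \;=\; \frac{n+d}{n+2d}\,\Vert h\Vert_{L^1(\mu_g)}
\]
(from \cite[Lemma 3]{nonGaussian}), then expands $0\le \Vert\, \vert h\vert - g\,\Vert^2_{L^2(\mu_g)} = \Vert h\Vert^2_{L^2} - 2\int_{\G}\vert h\vert g + \Vert g\Vert^2_{L^2}$ and substitutes; under the constraint $\Vert h\Vert_{L^1(\mu_g)}=\Vert g\Vert_{L^1(\mu_g)}$, the cross- and constant terms collapse to $-\Vert g\Vert^2_{L^2(\mu_g)}$, yielding $\Vert g\Vert^2_{L^2}\le \Vert h\Vert^2_{L^2}$. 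Uniqueness then follows because equality forces $\vert h\vert = g$ on $\G$. Your approach instead re-derives the relevant radial identities explicitly (which is morally the proof of the cited lemma), and applies weighted Cauchy--Schwarz on $\mathbb{S}^{n-1}$ to the functions $\sqrt{g}$ and $\vert h\vert/\sqrt{g}$ against $\mathrm{d}\sigma = g^{-(d+n)/d}\,\mathrm{d}\mathbb{S}^{n-1}$. The two proofs are in the same circle of ideas --- both are ``positivity of a quadratic form'' arguments --- but yours is self-contained and makes the sphere-measure $\sigma$ explicit, while the paper's is shorter modulo the external lemma and avoids taking square roots. Your equality analysis is also sound: the conclusion $h^2 = c^2 g^2$ on $\R^n$ forces $h=\pm c g$ since $\R[\x]$ is an integral domain (equivalently, $h$ is continuous and nonvanishing on the connected sphere $\mathbb{S}^{n-1}$, $n\ge 2$, hence of constant sign). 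The only flaw is cosmetic: the mid-proof aside (``wait, more precisely\dots'') should be cleaned up, and the displayed translated inequality should be stated once correctly rather than patched; but the mathematics is right.
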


In particular, the positive definite form $g\in \mathcal{C}_{d,n}$ has the smallest $L^2(\mu_g)$-norm among all degree $d$ forms $h\in \H_{d,n}$  that have the same $L^1(\mu_g)$-norm as $g$.

\begin{rem}
The above result also holds when $g$ and $h$ are positively homogeneous functions of degree $d$ with $g(\x)$ being positive for $\x\neq \mathbf{0}$, see \cite[Lemma 3]{nonGaussian} and our proof of Theorem \ref{thm:L2L1} in Section \ref{sec:L2L1}. \end{rem}


\subsection{Non-negative forms with sublevel sets of finite volume}\label{sub:volume}

It turns out that, for some non-negative forms $g\in \partial \mathcal{C}_{d,n}=\overline{\mathcal{C}_{d,n}}\setminus \mathcal{C}_{d,n}$ in the boundary of $\mathcal{C}_{d,n}$, the sublevel set $\G$ has finite Lebesgue volume $f(g)=\vol(\G)$, while it is not the case for all $g\in \partial \mathcal{C}_{d,n}$ for general $d$ and $n$.
Motivated by this observation, we consider the set $\mathcal{V}_{d,n}$ of forms $g\in \H_{d,n}$ with $f(g)<\infty$.
One naturally has $\mathcal{C}_{d,n}\subseteq \mathcal{V}_{d,n}\subseteq \overline{\mathcal{C}_{d,n}}$ and $\mathcal{V}_{d,n}\subset \H_{d,n}$ is a convex cone by \cite[Thm. 2.1]{parsimony}. 
It is straightforward to see that $\mathcal{V}_{2,n}=\mathcal{C}_{2,n}$, that is, only positive definite quadratic forms $g\in \overline{\mathcal{C}_{2,n}}$ have finite $f(g)$.
We give a complete characterization of binary forms in $\mathcal{V}_{d,2}$ in terms of multiplicities of their real zeros.

\begin{thm}\label{thm:binary}
 A non-negative binary form $g\in \overline{\mathcal{C}_{d,2}}$ is in $\mathcal{V}_{d,2}$ if and only if $g$ has zeros of order at most $d/2-1$. In particular, $\mathcal{V}_{4,2}=\mathcal{C}_{4,2}$.
\end{thm}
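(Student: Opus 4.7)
The plan is to reduce the volume $f(g)=\vol(\G)$ in the case $n=2$ to a single integral over the unit circle via polar coordinates, and then to read off the integrability condition from the factorization of a non-negative binary form into real linear factors of even multiplicity and positive definite quadratic factors. This direct approach avoids invoking the representation \eqref{eq:int}, which is only stated for $g\in\mathcal{C}_{d,n}$.

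First, setting $(x,y)=(r\cos\theta,r\sin\theta)$ and $\tg(\theta)=g(\cos\theta,\sin\theta)$, the homogeneity $g(x,y)=r^d\tg(\theta)$ combined with Fubini--Tonelli yields
\[
f(g)\ =\ \int_0^{2\pi}\!\int_0^\infty \mathbf{1}\{r^d\tg(\theta)\leq 1\}\, r\, \textrm{d}r\, \textrm{d}\theta\ =\ \frac{1}{2}\int_0^{2\pi}\tg(\theta)^{-2/d}\,\textrm{d}\theta,
\]
with the convention $0^{-2/d}=+\infty$. Since $\tg$ is a trigonometric polynomial with only finitely many zeros on $[0,2\pi)$, $f(g)<\infty$ is equivalent to the convergence of this improper integral.

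Next, I factor the non-negative binary form as $g(x,y)=c\prod_{i=1}^s(b_ix-a_iy)^{2k_i}\cdot q(x,y)$, where $(a_i,b_i)$ are pairwise non-proportional real unit vectors corresponding to the distinct real zeros of $g$ with even orders $2k_i\geq 2$, and $q$ is a product of positive definite quadratic forms (hence bounded below by a positive constant on $\mathbb{S}^{1}$). Writing $(a_i,b_i)=(\cos\theta_i,\sin\theta_i)$ gives $b_i\cos\theta-a_i\sin\theta=\sin(\theta_i-\theta)$, with two simple zeros $\theta_i,\theta_i+\pi$, so near each of them $\tg(\theta)\asymp|\theta-\theta_i|^{2k_i}$ and
\[
\tg(\theta)^{-2/d}\ \asymp\ |\theta-\theta_i|^{-4k_i/d}.
\]
This is locally integrable if and only if $4k_i/d<1$, i.e., $2k_i<d/2$. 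Because $2k_i$ is a positive even integer, the strict inequality $2k_i<d/2$ is equivalent to $2k_i\leq d/2-1$, proving the characterization. For the last assertion, $d=4$ forces $2k_i\leq 1$, which is impossible for a positive integer, so such a $g$ has no real zeros and belongs to $\mathcal{C}_{4,2}$.

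The calculation is routine; the main point I need to get right is the matching of the analytic threshold $2k_i<d/2$ with the stated combinatorial bound $2k_i\leq d/2-1$, which uses in an essential way that orders of zeros of non-negative binary forms are even. A minor care is needed to handle the $0^{-2/d}=+\infty$ convention so that the one-dimensional integral faithfully captures the volume even when $g$ vanishes on finitely many rays.
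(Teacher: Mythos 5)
Your proof is correct and takes a modestly different route from the paper's. The paper dehomogenizes, writing $\tilde g(y)=g(y,1)$ and expressing the volume as $\frac{2}{n}\int_{\R^{n-1}}\tilde g(\y)^{-n/d}\,\textrm{d}\y$ (which for $n=2$ becomes a one-dimensional integral over $\R$); to make this well-defined it must first rotate $g$ so that $g(1,0)\neq 0$, and it invokes a separate lemma (Lemma \ref{lem:crit}) to handle integrability at infinity and reduce to local analysis near the finitely many zeros of $\tilde g$, where it then uses the local factorization $\tilde g(y)=(y-y')^kh(y)$. You instead integrate in polar coordinates over the compact circle, getting $\frac{1}{2}\int_0^{2\pi}\tg(\theta)^{-2/d}\,\textrm{d}\theta$, and then use the global factorization of a non-negative binary form into squared real linear factors and positive definite quadratic factors. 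Working on the circle sidesteps both the rotation and the estimate at infinity, so your argument is self-contained for $n=2$; the paper's machinery is heavier precisely because the same dehomogenization and Lemma \ref{lem:crit} are reused in the proof of Theorem \ref{thm:generic} for general $n$. Both routes land on the same local comparison $|\theta-\theta_i|^{-4k_i/d}$ (resp. $|y-y'|^{-2k/d}$) and the same threshold.

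One small correction to your closing remark: the equivalence $2k_i<d/2 \iff 2k_i\leq d/2-1$ is just integer arithmetic (both sides are integers since $d$ is even) and does not use evenness of $2k_i$. Evenness of the zero orders is what is essential for the final assertion $\mathcal{V}_{4,2}=\mathcal{C}_{4,2}$, as you correctly use: any real zero would have order $\geq 2 > 1 = d/2-1$.
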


We also provide some sufficient conditions for membership in $\mathcal{V}_{d,n}$ for $n>2$. 
For this let us call a non-negative form $g\in \overline{C}_{d,n}$ \emph{generic}, if it is \emph{round at every its real zero $\x\in \R^n\setminus \{0\}$} (see \cite[p. 47]{Iliman2014}), that is, the Hessian matrix $\mathrm{Hess}_{\x} g=\left(\frac{\partial^2 g}{\partial x_i \partial x_j}(\x)\right)$ is positive definite when restricted to the orthogonal complement of $\x$
(equivalently, $\mathrm{Hess}_{\x} g$ is of corank one). 
Thus, a non-negative (but not positive definite) quadratic form $g(\x)=\x^{\mathsf T} G \x$ is generic if and only if the associated positive semi-definite matrix $G$ is of corank one.
Also, for a non-negative binary form $g\in \overline{C_{d,2}}$ the condition of being generic means that all real zeros of $g$ are of order two.

\begin{thm}\label{thm:generic}
For $d\geq 4$, $n\geq 3$ a generic non-negative form $g\in\overline{\mathcal{C}_{d,n}}$ is in $\mathcal{V}_{d,n}$.
\end{thm}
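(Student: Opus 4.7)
The first step is to pass from the volume integral to a spherical integral. By homogeneity of $g$, the sublevel set in polar coordinates $\x=r\z$, $r\geq 0$, $\z\in \mathbb{S}^{n-1}$, is $\{r^d g(\z)\leq 1\}$, so
\[
f(g)\ =\ \int_{\mathbb{S}^{n-1}}\int_0^{g(\z)^{-1/d}} r^{n-1}\,dr\,d\mathbb{S}^{n-1}(\z)\ =\ \frac{1}{n}\int_{\mathbb{S}^{n-1}} g(\z)^{-n/d}\,d\mathbb{S}^{n-1}(\z),
\]
with the convention that the integrand is $+\infty$ where $g(\z)=0$. Thus $g\in \mathcal{V}_{d,n}$ is equivalent to the integrability of $g^{-n/d}$ on $\mathbb{S}^{n-1}$. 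Away from the real zero set $Z(g)=\{\z\in\mathbb{S}^{n-1}:g(\z)=0\}$ the integrand is bounded, so the question is purely local near the points of $Z(g)$.

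The next step is to extract from the genericity hypothesis a quadratic lower bound on $g$ near each zero. Fix $\z_0\in Z(g)$. Since $g$ is non-negative and vanishes at $\z_0$, the point $\z_0$ is a global minimum of $g$, so $\nabla g(\z_0)=0$. By Euler's identity, $\z_0$ lies in $\ker\mathrm{Hess}_{\z_0}g$, and the genericity assumption says $\mathrm{Hess}_{\z_0}g$ restricted to $\z_0^\perp=T_{\z_0}\mathbb{S}^{n-1}$ is positive definite. In particular, the Hessian of $g|_{\mathbb{S}^{n-1}}$ at the critical point $\z_0$ coincides with this restriction (since $\nabla g(\z_0)=0$ kills the usual correction term), and hence is positive definite. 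The Morse lemma — or a direct Taylor expansion in a normal chart — yields constants $c,\epsilon>0$ such that
\[
g(\z)\ \geq\ c\,d_{\mathbb{S}^{n-1}}(\z,\z_0)^2,\qquad \z\in \mathbb{S}^{n-1},\ d_{\mathbb{S}^{n-1}}(\z,\z_0)<\epsilon.
\]
In the same stroke, non-degeneracy implies that each $\z_0\in Z(g)$ is isolated on $\mathbb{S}^{n-1}$, and compactness of $\mathbb{S}^{n-1}$ forces $Z(g)$ to be finite.

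It remains to check the integrability near a single zero. Using the local quadratic bound and $(n-1)$-dimensional polar coordinates $r=d_{\mathbb{S}^{n-1}}(\z,\z_0)$ around $\z_0$, we estimate
\[
\int_{d_{\mathbb{S}^{n-1}}(\z,\z_0)<\epsilon} g(\z)^{-n/d}\,d\mathbb{S}^{n-1}(\z)\ \leq\ C\int_0^\epsilon r^{-2n/d}\,r^{n-2}\,dr,
\]
which is finite if and only if $n-2-2n/d>-1$, i.e.\ $d>2n/(n-1)$. For $n\geq 3$ one has $2n/(n-1)\leq 3$, so the assumption $d\geq 4$ guarantees convergence. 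Summing the contributions of the finitely many zeros and the bounded integral over the complement of a neighborhood of $Z(g)$ gives $f(g)<\infty$, i.e.\ $g\in\mathcal{V}_{d,n}$.

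The main technical point to make airtight is the passage from the (affine) Hessian $\mathrm{Hess}_{\z_0}g$ to the Hessian of $g|_{\mathbb{S}^{n-1}}$ at $\z_0$ — crucially relying on $\nabla g(\z_0)=0$ — together with the resulting Morse-type quadratic lower bound in an explicit normal chart. Everything else reduces to the clean spherical integrability threshold $d>2n/(n-1)$, which is the quantitative reason the statement is restricted to $d\geq 4$, $n\geq 3$.
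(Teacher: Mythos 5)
Your proposal is correct. The route is close in spirit to the paper's but differs in one useful way: you integrate directly over the sphere in polar coordinates, writing $f(g)=\tfrac{1}{n}\int_{\mathbb{S}^{n-1}} g^{-n/d}$, whereas the paper dehomogenizes (setting $\tilde g(\y)=g(\y,1)$) and works with $f(g)=\tfrac{2}{n}\int_{\R^{n-1}}\tilde g^{-n/d}$. Because of that, the paper needs an extra lemma (Lemma \ref{lem:crit}) to control the integral at infinity in $\R^{n-1}$ and Lemma \ref{lem:generic} to ensure, after a rotation, that $g$ has no zeros in the hyperplane at infinity; your spherical version dispenses with both thanks to compactness of $\mathbb{S}^{n-1}$, which makes the localization to neighborhoods of the finitely many zeros immediate. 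The core of both arguments is identical: the genericity hypothesis (transverse Hessian positive definite, using $\nabla g(\z_0)=0$ to discard the second-fundamental-form correction — in the paper this is the passage from $\mathrm{Hess}_\x g$ to $\mathrm{Hess}_\y\tilde g$) gives a quadratic lower bound near each zero, and then the radial integral $\int_0^\epsilon r^{n-2-2n/d}\,dr$ converges exactly when $d>2n/(n-1)$, which $d\geq4$, $n\geq3$ guarantees. Your explicit threshold $2n/(n-1)$ is a nice addition: it transparently explains why $n\geq3$ is needed and why the binary case is treated separately in Theorem \ref{thm:binary}.
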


Theorems \ref{thm:binary} and \ref{thm:generic} are proven in Section \ref{sec:volume}.
Now, we consider some examples of generic non-negative forms that lie on the boundary of $\mathcal{C}_{d,n}$. 
\begin{ex}
The \emph{Motzkin form} 
\begin{align}\label{eq:Motzkin} 
g(\x)\ =\ x_1^4x_2^2+x_1^2x_2^4+x_3^6-3x_1^2x_2^2x_3^2,\quad \x=(x_1,x_2,x_3),
\end{align} 
was historically the first explicit example of a non-negative form that is not a sum of squares, see \cite{Rez1996}. A point $\x\in \R^3\setminus\{\mathbf{0}\}$ is a zero of $g$ if and only if $\vert x_1\vert=\vert x_2\vert=\vert x_3\vert$. Setting $x_3=1$ in $g$ gives \emph{the Motzkin polynomial} $\tilde g(\y)=y_1^4y_2^2+y_1^2y_2^4+1-3y_1^2y_2^2$, whose real zeros are $(1,1)$, $(1,-1)$, $(-1,1)$, $(-1,-1)$. At any  real zero $\x\in \R^3\setminus \{\mathbf{0}\}$ (say, $\x=(1,1,1)$) of $g$ the Hessian $\textrm{\normalfont{Hess}}_{\x} g$ is positive semi-definite and has rank $2$ and hence the Motzkin form $g\in \partial\mathcal{C}_{6,3}$ is generic in the above sense.
\end{ex}

As the following example shows, it is not difficult to find generic forms in $\overline{\mathcal{C}_{d,n}}$ also for higher $n\geq 3$ and even $d\geq 4$.

\begin{ex}\label{ex:series}
Consider a sum of squares form \[g(\x)\ =\ x_n^{d-2}\sum_{i=1}^{n-1} x_i^2+\frac{2}{d}\sum_{i=1}^{n-1} x_i^d,\quad \x=(x_1,\dots, x_{n-1},x_n).\] 
Then a real zero $\x\in \R^n$ of $g$ must satisfy $x_1=\dots=x_{n-1}=0$. At $\x=(0,\dots, 0, 1)$ the Hessian of $g$ has rank $n-1$ and hence $g\in \partial\mathcal{C}_{d,n}$ is generic.
\end{ex}

By Theorem \ref{thm:generic}, the sublevel sets of the Motzkin form \eqref{eq:Motzkin} and of forms constructed in Example \ref{ex:series} are of finite Lebegue volume.

One might wonder, whether the term ``generic'' is in place in the above context. 
The following remark motivates our choice for the terminology.
\begin{rem}
The boundary $\partial\mathcal{C}_{d,n}\subset \H_{d,n}$ of the cone of non-negative forms is a semialgebraic subset of codimension one. 
In Proposition \ref{prop:nongeneric} we show that the set of non-generic forms is a semialgebraic subset of $\H_{d,n}$ of codimension at least $2$ and that it is nowhere dense in the boundary $\partial \mathcal{C}_{d,n}\subset \H_{d,n}$ of $\mathcal{C}_{d,n}$, endowed with the topology induced  from $(\H_{d,n},\langle\cdot,\cdot\rangle)$.  
\end{rem}

\smallskip
\begin{center}
    \textbf{Structure of the paper}
\end{center}
\smallskip

There are 4 sections, each corresponding to a subsection of the above introductory section. Thus, in the next section we prove complete monotonicity of the volume function \eqref{eq:volume_function} and discuss a generalization of this result as well as treat the case of sums of squares. In Section \ref{sec:moments} we discuss moment matrices of centered Gaussian vectors and prove Theorem \ref{thm:moments} and Corollary \ref{cor:Nesterov}. In Section \ref{sec:L2L1} a proof of the extremal property from Theorem \ref{thm:L2L1} is presented. And in the last section we characterize non-negative forms, whose sublevel sets have finite Lebesgue volume.  
\medskip

\noindent{\bf Acknowledgements:} we are thankful to Boulos El Hilany for helpful discussions.
\smallskip

\section{Complete monotonicity of the volume function}\label{sec:CM}

Let $C\subset \R^N$ be an open convex cone and denote by $C^*$ \emph{the dual cone} to $C$,
\[C^*\ =\ \left\{\mathbf{L}\in \left(\R^N\right)^*\,:\, \mathbf{L}(\mathbf{g})\geq 0\ \forall\ \mathbf{g}\in C\right\}.\] 
\begin{defn}
  A function $f:C\to\R$ is \emph{completely monotone}, if it is $C^\infty$-differentiable and for all $k\in\N$ and all vectors $\mathbf{v}_1,\ldots,\mathbf{v}_k\in C$
\begin{align}\label{eq:CM}
 (-1)^k D_{\mathbf{v}_1}\cdots D_{\mathbf{v}_k}\, f(\mathbf{g})\,\geq\,0,\quad \mathbf{g}\in C,
\end{align}
where $D_{\mathbf{v}}$ denotes the directional derivative along the vector $\mathbf{v}$.
\end{defn}
The Bernstein-Hausdorff-Widder theorem \cite[Thm. 12a]{Widder} gives a characterization of completely
monotone functions in one variable, these are exactly Laplace transforms of Borel
measures on the positive reals. Choquet \cite[Thm. 10]{Choquet} found a generalization of this result to  convex cones in higher dimensional spaces.
\begin{thm}[Bernstein-Hausdorff-Widder-Choquet theorem]\label{thm:BHWC}
A smooth function $f:C\rightarrow  \R$ on an open convex cone $C\subset \R^N$ is completely monotone if and only if it is \emph{the Laplace transform} of a unique Borel measure $\mu$ supported on  $C^*$, that is,
\begin{align}\label{eq:BHWC}
    f (\mathbf{g})\ =\ \int_{C^*} \exp(-\mathbf{L}(\mathbf{g}))\, \textrm{\normalfont{d}}\mu(\mathbf{g}).
    \end{align}
\end{thm}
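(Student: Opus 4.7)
The plan is to prove the two directions separately, and then address uniqueness.

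For the easy direction (sufficiency), I would start from a measure $\mu$ on $C^*$ such that the integral \eqref{eq:BHWC} converges on all of $C$, and verify the sign conditions \eqref{eq:CM} by differentiating under the integral. Formally, for any $\mathbf{v}\in C$ and $\mathbf{g}\in C$, one obtains
\[
D_{\mathbf{v}}f(\mathbf{g})\ =\ -\int_{C^*}\mathbf{L}(\mathbf{v})\exp(-\mathbf{L}(\mathbf{g}))\,\mathrm{d}\mu(\mathbf{L}),
\]
and iterating gives $(-1)^k D_{\mathbf{v}_1}\cdots D_{\mathbf{v}_k}f(\mathbf{g})=\int_{C^*}\mathbf{L}(\mathbf{v}_1)\cdots\mathbf{L}(\mathbf{v}_k)\exp(-\mathbf{L}(\mathbf{g}))\,\mathrm{d}\mu(\mathbf{L})$. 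Since $\mathbf{L}\in C^*$ and the $\mathbf{v}_i\in C$, each factor $\mathbf{L}(\mathbf{v}_i)$ is non-negative, so the integral is non-negative. The only technical caveat is justifying differentiation under the integral sign; this follows from standard domination arguments once one picks a small cone neighborhood of $\mathbf{g}$ and uses that the exponentials together with their polynomial pre-factors are uniformly integrable there.

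For the hard direction (necessity), my approach is via Choquet's integral representation theorem applied to a compact convex base of the cone of completely monotone functions. Fix a base point $\mathbf{g}_0\in C$ and let $K$ be the convex set of completely monotone $f: C\to\R$ with $f(\mathbf{g}_0)=1$. Equipped with the topology of pointwise (equivalently, uniform on compacta) convergence, $K$ is a metrizable compact convex subset of a locally convex space: closedness and convexity are immediate from \eqref{eq:CM}, while compactness follows because complete monotonicity forces logarithmic convexity along rays and hence uniform boundedness on compact subsets of $C$. Choquet's theorem then represents every $f\in K$ as a barycenter of a probability measure supported on the extreme points $\mathrm{ext}(K)$.

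The main obstacle, and the geometric heart of the argument, is identifying $\mathrm{ext}(K)$ with the set of normalized exponentials $\{\mathbf{g}\mapsto \exp(-\mathbf{L}(\mathbf{g}-\mathbf{g}_0))\,:\,\mathbf{L}\in C^*\}$. To show these are extreme, I would assume $\exp(-\mathbf{L}(\cdot-\mathbf{g}_0))=\tfrac{1}{2}(f_1+f_2)$ with $f_1,f_2\in K$ and use strict convexity of $-\log$ together with the complete monotonicity of $f_1,f_2$ to force $f_1=f_2$. Conversely, to show every extreme $f$ is such an exponential, I would exploit the semigroup structure: for each $\mathbf{v}\in C$, the operator $T_{\mathbf{v}}: f\mapsto f(\cdot+\mathbf{v})/f(\mathbf{v})$ preserves $K$, and extremality of $f$ forces $T_{\mathbf{v}}f=f$, i.e. $f(\mathbf{g}+\mathbf{v})=f(\mathbf{g})f(\mathbf{v})/f(\mathbf{g}_0)^{?}$; working out the multiplicative functional equation on $C$ and using the $C^\infty$ regularity yields $f(\mathbf{g})=\exp(-\mathbf{L}(\mathbf{g}-\mathbf{g}_0))$ for some linear functional $\mathbf{L}$, and non-negativity of $(-1)^k D_{\mathbf{v}_1}\cdots D_{\mathbf{v}_k}f$ forces $\mathbf{L}(\mathbf{v})\geq 0$ for $\mathbf{v}\in C$, i.e.\ $\mathbf{L}\in C^*$. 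The Choquet measure on $\mathrm{ext}(K)$ then pulls back along $\mathbf{L}\mapsto \exp(-\mathbf{L}(\cdot-\mathbf{g}_0))$ to a Borel measure on $C^*$ which, after absorbing the normalization factor $\exp(\mathbf{L}(\mathbf{g}_0))$ into the measure, yields \eqref{eq:BHWC}.

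Finally, for uniqueness of $\mu$, I would note that two representing measures give Laplace transforms agreeing on the open cone $C$, and since the family $\{\exp(-\mathbf{L}(\cdot))\}_{\mathbf{L}\in C^*}$ separates finite Borel measures on $C^*$ (the Laplace transform is injective on the cone, by an analytic continuation and Stone--Weierstrass style argument on compacta of $C^*$), the two measures must coincide.
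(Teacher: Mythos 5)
The paper itself offers no proof of Theorem \ref{thm:BHWC}: it is quoted as a classical result with pointers to Widder \cite[Thm. 12a]{Widder} for the univariate case and to Choquet \cite[Thm. 10]{Choquet} for the cone version, so there is no in-paper argument to compare yours against. Your overall strategy---sufficiency by differentiating under the integral, necessity via a Choquet barycentric representation over the extreme points of a compact convex base, with the extreme points identified as exponentials through the shift operators $T_{\mathbf{v}}$, and uniqueness by injectivity of the Laplace transform on measures---is a legitimate and in fact essentially the classical route. The sufficiency and uniqueness parts are fine modulo routine domination and density arguments.

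There is, however, a genuine gap in the compactness step of the necessity direction. You claim that the normalized set $K=\{f\ \text{c.m.}:\,f(\mathbf{g}_0)=1\}$ is compact in the topology of pointwise (equivalently, uniform-on-compacta) convergence because ``complete monotonicity forces logarithmic convexity along rays and hence uniform boundedness on compact subsets of $C$.'' This is false: normalizing at a single interior point $\mathbf{g}_0$ gives no control of $f$ at points lying between $\mathbf{g}_0$ and $\partial C$. Already for $C=(0,\infty)$ and $\mathbf{g}_0=1$, the functions $f_s(x)=\exp\bigl(s(1-x)\bigr)$, $s\ge 0$, all lie in $K$, yet $f_s(1/2)=\exp(s/2)\to\infty$, so $K$ is not pointwise bounded and in particular not compact (in either topology). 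Choquet's theorem therefore does not apply directly to this $K$. The standard repair is to first prove the theorem for completely monotone functions that extend continuously to the \emph{closed} cone $\overline{C}$, normalized by $f(\mathbf{0})=1$; there one does have $f(\mathbf{g})\le f(\mathbf{0})=1$ for all $\mathbf{g}\in\overline{C}$ by monotonicity along rays, so the normalized set lies in the Tychonoff-compact product $\prod_{\mathbf{g}\in\overline{C}}[0,1]$, is closed under pointwise limits, and your Choquet argument goes through. One then deduces the open-cone case by applying this to the translates $f_a(\mathbf{g}):=f(\mathbf{g}+a\bar{\mathbf{g}})$ for a fixed $\bar{\mathbf{g}}\in C$ and $a>0$ (each $f_a$ is completely monotone on $\overline{C}$); the consistency $f_{a+b}(\mathbf{g})=f_a(\mathbf{g}+b\bar{\mathbf{g}})$ together with uniqueness forces $\textrm{d}\nu_a(\mathbf{L})=\exp(-a\,\mathbf{L}(\bar{\mathbf{g}}))\,\textrm{d}\mu(\mathbf{L})$ for a single Borel measure $\mu$ on $C^*$, and letting $a\downarrow 0$ via monotone convergence recovers \eqref{eq:BHWC}. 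Without a reduction of this kind, the direct Choquet step you propose does not go through.
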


For a non-negative form $h\in \overline{\mathcal{C}_{e,n}}$ of degree $e\geq 0$ the function $f_h$ defined in \eqref{eq:CMgen} admits an integral representation (see \cite[Thm. 1]{nonGaussian})
\begin{align}\label{eq:repgen}
    f_h(g)\ =\ \int_{\G} h(\x)\,\textrm{d}\x\ =\ \frac{1}{\Gamma(1+(n+e)/d)}\,\int_{\R^n} h(\x)\exp(-g)\,\textrm{d}\x,\quad g\in \mathcal{C}_{d,n}.
\end{align}
In particular, the Lebesgue volume of the sublevel set $\G=\{\x\in \R^n:g(\x)\leq 1\}$ of $g$ is obtained with $h=1$ as
\begin{align}\label{eq:rep}
  f(g)\ =\ \int_{\G} \textrm{d}\x\ =\ \frac{1}{\Gamma(1+n/d)}\,\int_{\R^n} \exp(-g(\x))\,\textrm{d}\x,\quad g\in \mathcal{C}_{d,n}.
\end{align}
Moreover, $f_h$ is $C^\infty$-differentiable and its derivatives are expressed as follows.

\begin{prop}\label{prop:derivative}
Let $h\in \overline{\mathcal{C}_{e,n}}$. Then for any $k\in \N$ and any $v_1,\dots, v_k\in \H_{d,n}$ we have for $g\in \mathcal{C}_{d,n}$
  \begin{equation}\label{eq:derivative}
  \begin{aligned}
    (-1)^k D_{v_1}\dots D_{v_k} f_h(g)\, &=\, \frac{\Gamma\left(1+k+\frac{n+e}{d}\right)}{\Gamma\left(1+\frac{n+e}{d}\right)}\int_{\G}h(\x) v_1(\x)\cdots v_k(\x)\, \textrm{\normalfont{d}}\x\\
    &=\, \frac{1}{\Gamma\left(1+\frac{n+e}{d}\right)}\int_{\R^n} h(\x) v_1(\x)\cdots v_k(\x) \exp(-g(\x))\,\textrm{\normalfont{d}}\x.
  \end{aligned}
\end{equation}
\end{prop}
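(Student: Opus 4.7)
The plan is to start from the integral representation \eqref{eq:repgen} of $f_h$, differentiate it $k$ times under the integral sign to obtain the second equality in \eqref{eq:derivative}, and then translate the resulting $\R^n$-integral back to an integral over the sublevel set $\G$ by exploiting the homogeneity of $g$ and of the product $h\,v_1\cdots v_k$.

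For the differentiation step I would proceed by induction on $k$, with the base case $k=0$ being \eqref{eq:repgen} itself. Each directional derivative $D_{v_i}$ applied to $\exp(-g(\x))$ produces the factor $-v_i(\x)\exp(-g(\x))$, so after $k$ iterations the sign $(-1)^k$ compensates and the product $v_1(\x)\cdots v_k(\x)$ appears inside the integrand, yielding the second line of \eqref{eq:derivative}. Interchanging derivative and integral is justified by dominated convergence: since $\mathcal{C}_{d,n}$ is open, for any $g\in\mathcal{C}_{d,n}$ and any direction $v\in\H_{d,n}$ there is $t_0>0$ with $g+tv\in\mathcal{C}_{d,n}$ for $|t|\le t_0$, and by compactness of $\mathbb{S}^{n-1}$ together with joint continuity of $(t,\z)\mapsto(g+tv)(\z)$ one obtains a constant $c>0$ such that $(g+tv)(\x)\geq c\,\|\x\|^d$ for all $\x\in\R^n$ and $|t|\le t_0$. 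Consequently the integrand and its difference quotients are dominated uniformly in $|t|\le t_0$ by the integrable function $|h(\x)v_1(\x)\cdots v_k(\x)|\exp(-c\|\x\|^d)$.

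For the first equality I would establish the auxiliary identity
\[
\int_{\R^n} F(\x)\exp(-g(\x))\,\mathrm{d}\x\ =\ \Gamma\!\left(1+\tfrac{m+n}{d}\right)\int_{\G} F(\x)\,\mathrm{d}\x,
\]
valid for any continuous positively $m$-homogeneous function $F$ on $\R^n$ and any $g\in\mathcal{C}_{d,n}$. The cleanest route is to compute the distribution function $\Phi_F(t)=\int_{\{g\le t\}}F(\x)\,\mathrm{d}\x$ via the rescaling $\x=t^{1/d}\y$, which together with $d$-homogeneity of $g$ and $m$-homogeneity of $F$ yields $\Phi_F(t)=t^{(m+n)/d}\int_{\G}F\,\mathrm{d}\y$; a layer-cake (or Fubini) computation combined with the functional equation $\tfrac{m+n}{d}\Gamma(\tfrac{m+n}{d})=\Gamma(1+\tfrac{m+n}{d})$ then gives the claim (first for $F\geq 0$, then by linearity for signed $F$). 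Applied to $F=h\,v_1\cdots v_k$, which is positively homogeneous of degree $e+kd$, this substitution turns the second equality in \eqref{eq:derivative} into the first.

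The main technical obstacle is the uniform integrable domination required for $k$-fold differentiation under the integral sign; the factors $v_i$ need not be of fixed sign, but positivity and $d$-homogeneity of the driving form $g$, together with openness of $\mathcal{C}_{d,n}$ and compactness of $\mathbb{S}^{n-1}$, supply the uniform lower bound $(g+tv)(\x)\geq c\|\x\|^d$ needed to dominate polynomial growth by exponential decay. Once this bound and the homogeneity substitution formula are in hand, the rest of the proof is a routine induction and a one-dimensional Gamma-function computation.
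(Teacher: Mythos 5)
Your proposal is correct and takes essentially the same route as the paper: differentiate under the integral sign (justified by dominated convergence via the lower bound $g(\x)\geq g_{\min}\|\x\|^d$ coming from compactness of $\mathbb{S}^{n-1}$), then convert the resulting $\R^n$-integral to a $\G$-integral through the homogeneity/Gamma-function identity. The only difference is cosmetic: you rederive that identity from a layer-cake/rescaling computation and state the dominated-convergence bound uniformly in the perturbation parameter $t$, whereas the paper cites the identity as formula $(20)$ of Lasserre's earlier work and invokes multilinearity in $v_1,\dots,v_k$, leaving the uniformity of the domination implicit.
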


\begin{proof}
  Since $g(\x)$ is positive for $\x\neq \boldsymbol{0}$, its minimum $g_{\min} = \min_{\Vert \x\Vert=1} g(\x)>0$ over the unit sphere in $\R^n$ is positive.
  By homogeneity, $g(\x)\geq g_{\min}\Vert \x\Vert^d$, $\x\in \R^n$.
  Therefore, derivatives of the integrand in \eqref{eq:repgen} read\smallskip
  \begin{equation*}\label{eq:der_integrand}
    \begin{aligned}
      (-1)^k D_{v_1}\dots D_{v_k} \left(h(\x) \exp(-g(\x))\right)\, =\, h(\x) v_1(\x)\cdots v_k(\x) \exp(-g(\x)),\quad \x\in \R^n.
    \end{aligned}
  \end{equation*}
  Since this is majorized by the function $h(\x)\vert v_1(\x)\cdots v_k(\x)\vert \exp(-g_{\min}\Vert \x\Vert^d)$, which is clearly integrable, dominated converge theorem implies that\smallskip
  \begin{equation}\label{eq:derivative1}
    \begin{aligned}
      (-1)^k D_{v_1}\dots D_{v_k} f_h(g)\, =\, \frac{1}{\Gamma\left(1+\frac{n+e}{d}\right)} \int_{\R^n} h(\x) v_1(\x)\cdots v_k(\x) \exp(-g(\x))\,\textrm{\normalfont{d}}\x.
    \end{aligned}
  \end{equation}
For any multiindex $\alpha\in \N^n$ formula \cite[(20)]{nonGaussian} yields
  \begin{equation*}
    \begin{aligned}
 \int_{\R^n} h(\x)\,\x^{\alpha} \exp(-g(\x))\,\textrm{\normalfont{d}}\x\ =\     \Gamma(1+(n+e+\vert \alpha\vert)/d) \int_{\G} h(\x)\, \x^{\alpha}\,\textrm{\normalfont{d}}\x.
    \end{aligned}
  \end{equation*}
From multilinearity of \eqref{eq:derivative1} in $v_1,\dots, v_k$ the remaining equality in \eqref{eq:derivative} follows.
\end{proof}

\begin{rem}
Proposition \ref{prop:derivative} implies that  $f_h: \mathcal{C}_{d,n}\rightarrow \R$ and hence the volume function \eqref{eq:volume_function} are completely monotone. Indeed, for any $k\in \N$ and positive definite forms $v_1,\dots, v_k\in \mathcal{C}_{d,n}$ the derivative $(-1)^k D_{v_1}\dots D_{v_k}f_h(g)$ is non-negative for all $g\in \mathcal{C}_{d,n}$ by \eqref{eq:derivative}.  
In fact, it is even strictly positive as one can see directly. 
\end{rem}

Next, we give an alternative proof of complete monotonicity of the volume function, that also delivers an integral representation predicted by Theorem \ref{thm:BHWC}.

\begin{proof}[Proof of Theorem \ref{thm:CM}]
As already mentioned, complete monotonicity of the volume function \eqref{eq:volume_function} is guaranteed by Proposition \ref{prop:derivative}.

Denote by $\mu$ the push-forward measure of the Lebesgue measure $\lambda$ on $\R^n$ under the Veronese map \eqref{eq:Veronese}, that is, $\mu(B)\ =\ \lambda(\boldsymbol\Theta_{d,n}^{-1}(B))$ for any Borel measurable subset $B\subseteq \R^n$. 
Then the definition \eqref{eq:Veronese} of $\boldsymbol\Theta_{d,n}$ and \cite[Thm. 3.6.1]{Bogachev} give
 \[\int_{\R^n}\exp(-g(\boldsymbol\ell))\,\textrm{d}\boldsymbol\ell\ =\ \int_{\R^n}\exp(-\langle \theta_{\boldsymbol\ell}, g\rangle) \,\textrm{d}\lambda(\boldsymbol\ell)\ =\ \int_{\mathcal{C}_{d,n}^{\,*}}\exp(-\langle \theta, g\rangle)\,\textrm{d}\mu(\theta),\]
which combined with \eqref{eq:int}  yields \eqref{eq:int_rep}.
\end{proof}

\begin{rem}
Analogously, when $h\in \overline{\mathcal{C}_{d,n}}$ has degree $d$, one can show that
\[ 
f_h(g)\ =\ \int_{\mathcal{C}_{d,n}^{\,*}} \exp\left(-\langle \theta,g\rangle\right)\frac{\langle \theta, h\rangle \,\textrm{\normalfont{d}}\mu(\theta)}{\Gamma(1+(n+d)/d)},
\]
where $\mu$ is the push-forward measure of the Lebesgue measure on $\R^n$ under the Veronese map \eqref{eq:Veronese}. 
Thus, the unique Borel measure from Theorem \ref{thm:BHWC}, that is associated to the completely monotone function $f_h$ is proportional to $\langle \cdot,h\rangle \mu$. 
\end{rem}



By the Leibniz rule for derivatives, the product of two completely monotone functions is completely monotone.
In particular, for any integer $s\in \N$ the $s$-th power $f^s$ of the volume function \eqref{eq:volume_function} is completely monotone.
Interestingly, in the setting of quadratic forms $(d=2)$, $f^s$ is completely monotone for all sufficiently large (not necessarily integer) powers $s>>1$.

\begin{prop}
The function $g\mapsto f(g)^s$ on $\mathcal{C}_{2,n}$  is completely monotone if and only if $s=0,1,2,\dots,n-2$ or $s\geq n-1$.  
\end{prop}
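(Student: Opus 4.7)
The plan is to reduce this to the classical characterization, due to Gindikin (and recovered in the paper \cite{ScottSokal} of Scott and Sokal as their Theorem 1.3), of those exponents $t\geq 0$ for which the function $G\mapsto (\det G)^{-t}$ on the cone $\textrm{PD}_n$ of positive definite $n\times n$ real symmetric matrices is completely monotone. Recall that the so-called Wallach set is
\[
W_n\ =\ \left\{0,\tfrac{1}{2},1,\tfrac{3}{2},\dots,\tfrac{n-2}{2}\right\}\,\cup\,\left[\tfrac{n-1}{2},\infty\right),
\]
and the cited result asserts that $G\mapsto (\det G)^{-t}$ is completely monotone on $\textrm{PD}_n$ if and only if $t\in W_n$.

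First I would use the explicit formula (stated later in the paper as Proposition \ref{prop:2d=2}) for the volume function on $\mathcal{C}_{2,n}$: for $g(\x)=\x^{\mathsf T} G\x$ with $G\in \textrm{PD}_n$, one has
\[
f(g)\ =\ \frac{\pi^{n/2}}{\Gamma\!\left(1+n/2\right)}\,(\det G)^{-1/2},
\]
so that $f(g)^s$ equals a positive constant (depending only on $n$ and $s$) times $(\det G)^{-s/2}$.

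Next I would observe that the map $\Phi:\textrm{Sym}_n(\R)\to \H_{2,n}$ sending $G$ to the quadratic form $\x\mapsto \x^{\mathsf T} G\x$ is a linear isomorphism of vector spaces that restricts to a bijection of the open convex cones $\textrm{PD}_n$ and $\mathcal{C}_{2,n}$. Any such linear isomorphism of cones sends completely monotone functions to completely monotone functions, because the definition \eqref{eq:CM} involves only iterated directional derivatives along cone vectors, and both the derivatives and the cone inclusions transform covariantly. Hence complete monotonicity of $g\mapsto f(g)^s$ on $\mathcal{C}_{2,n}$ is equivalent to complete monotonicity of $G\mapsto (\det G)^{-s/2}$ on $\textrm{PD}_n$.

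Finally I would translate the Wallach-set condition $s/2\in W_n$ back to $s$. The discrete part $s/2\in\{0,1/2,1,\dots,(n-2)/2\}$ corresponds to $s\in\{0,1,2,\dots,n-2\}$, while the continuous part $s/2\geq (n-1)/2$ corresponds to $s\geq n-1$, yielding precisely the set in the statement. I do not anticipate a serious obstacle here: the only point that needs care is the transfer of complete monotonicity under the identification $\Phi$ (smoothness is automatic since $\det$ is positive and smooth on $\textrm{PD}_n$), and the actual analytic content is packaged in the Gindikin/Scott–Sokal theorem, whose full proof one would not reproduce.
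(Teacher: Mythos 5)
Your proposal is correct and follows essentially the same route as the paper: reduce via Proposition \ref{prop:2d=2} to complete monotonicity of $G\mapsto(\det G)^{-s/2}$ on $\textrm{PD}_n$ and then invoke \cite[Theorem 1.3]{ScottSokal} (the Gindikin/Wallach-set characterization). You supply more detail than the paper — writing out the Wallach set and spelling out the transfer of complete monotonicity under the linear isomorphism $G\mapsto \x^{\mathsf T}G\x$ — but the substance and the key citation are identical.
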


\begin{proof}
By Proposition \ref{prop:2d=2} below, we have that
\[f(g)^s\ =\ \left(\frac{\sqrt{\pi}^n}{\Gamma(1+n/2)}\right)^s \det(G)^{-s/2},\quad g(\x)=\x^{\mathsf T}G \x,\]
where $G$ is the positive definite matrix representing the quadratic form $g\in \mathcal{C}_{2,n}$.
The claim now directly follows from \cite[Theorem 1.3]{ScottSokal}.
\end{proof}

By its construction, the representing measure $\mu$ from Theorem \ref{thm:CM} is supported on the set $\boldsymbol\Theta_{d,n}(\R^n)\subset \H_{d,n}$ of $d$-th powers of linear forms, which is also known as  (the real part of) \emph{the Veronese cone}. 
By \cite[Prop. 5.7]{KMS}, the representing measure of the power $f^s$ of \eqref{eq:volume_function} is proportional to \emph{the $s$-th convolution power $\mu^{*s}$ of $\mu$}, 
\begin{align*}
  f^s\ =\ \prod_{j=1}^s \int_{\mathcal{C}^{\,*}_{d,n}} \exp(-\langle \theta,g\rangle)\,\frac{\textrm{d}\mu(\theta)}{\Gamma\left(1+n/d\right)}\ =\ \int_{\mathcal{C}^{\,*}_{d,n}} \exp(-\langle \theta,g\rangle)\, \frac{\textrm{d}\mu^{*s}(\theta)}{\Gamma\left(1+n/d\right)^s}.
\end{align*}
The support of the measure $\mu^{*s}$ is \emph{the $s$-th Minkowski power of $\boldsymbol\Theta_{d,n}(\R^n)$},
\begin{align}\label{eq:Minkowski}
  \supp(\mu^{*s})\,=\, \{\theta_{\boldsymbol\ell^1}+\dots+\theta_{\boldsymbol\ell^s}\,:\, \boldsymbol\ell^1,\dots, \boldsymbol\ell^s\in \R^n\}.
\end{align}

By Richter's theorem, for $s={n-1+d \choose d}$ the support \eqref{eq:Minkowski} of $\mu^{*s}$ fills the cone $\mathcal{C}_{d,n}^{\,*}$, see \cite[Satz 4]{Richter1957ParameterfreieAU} and \cite[Thm. 19]{DIDIO2022126066}. 
Actually, by \cite[Thm. 57]{DIDIO2022126066}, this already happens for some $s\leq {n-1+d\choose d}-n+1$. 

\subsection{Volume function of sums of squares}\label{sub:SOS}
Recall from \eqref{eq:SOS} that a non-negative form $g\in \overline{\mathcal{C}_{d,n}}$ is \textrm{SOS} if and only if it can be written as $g(\x)=\m_{d/2}(\x)^{\mathsf T}G\m_{d/2}(\x)$, $\x\in \R^n$, for some positive semi-definite (Gram) matrix $G$. Moreover, if $g\in \mathcal{C}_{d,n}$ is positive definite, it has a positive definite Gram matrix $G\in \textrm{PD}_M$.
Given a  non-negative form $h\in \overline{\mathcal{C}_{e,n}}$, we consider a composed map
\begin{equation}\label{eq:hsos}
\begin{aligned}
  f_h^{\,\mathrm{sos}}: \mathrm{PD}_M\ &\rightarrow\ (0,\infty),\\
  G\ &\mapsto\ f_h(\m_{d/2}(\x)^{\mathsf T}\,G\,\m_{d/2}(\x)),
\end{aligned}
\end{equation}
where $f_h$ is defined in \eqref{eq:CMgen}.
When $h=1$, we call
\begin{equation}\label{eq:sos}
\begin{aligned}
  f^{\,\mathrm{sos}}(G)\ =\ f_1^{\,\mathrm{sos}}(G)\ =\ f(\m_{d/2}(\x)^{\mathsf T}\,G\,\m_{d/2}(\x)),\quad G\in \mathrm{PD}_M,
\end{aligned}
\end{equation}
\emph{the \emph{SOS} volume function}.
The map \eqref{eq:SOS} is linear in $G$ and therefore, by Proposition \ref{prop:derivative}, \eqref{eq:hsos} (and hence also \eqref{eq:sos}) must be completely monotone on its domain $\textrm{PD}_M$.

\begin{prop}
  The function $f_h^{\,\mathrm{sos}}:\textrm{\normalfont{PD}}_M\rightarrow (0,\infty) $ is completely monotone.
\end{prop}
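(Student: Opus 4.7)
My plan is to reduce the claim directly to Proposition~\ref{prop:derivative} via the chain rule. Let $\Phi$ denote the linear map from the space of $M\times M$ real symmetric matrices to $\H_{d,n}$ defined by $\Phi(G)=\m_{d/2}(\x)^{\mathsf T}G\,\m_{d/2}(\x)$. I first check that $\Phi$ restricts to a map $\textrm{PD}_M\to \mathcal{C}_{d,n}$: for any $\x\neq \boldsymbol{0}$ some coordinate $x_i$ is non-zero, so $\m_{d/2}(\x)$ contains the non-zero entry $x_i^{d/2}$, and hence $\Phi(G)(\x)=\m_{d/2}(\x)^{\mathsf T}G\,\m_{d/2}(\x)>0$ whenever $G\in\textrm{PD}_M$. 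Consequently $f_h^{\,\mathrm{sos}}=f_h\circ\Phi$ is well defined on $\textrm{PD}_M$, and it is $C^\infty$ there because $f_h$ is $C^\infty$ on $\mathcal{C}_{d,n}$ (by Proposition~\ref{prop:derivative}) and $\Phi$ is linear.

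Next, for $V_1,\dots,V_k\in \textrm{PD}_M$ and $G\in\textrm{PD}_M$, I apply the chain rule. Since $\Phi$ is linear its differential at any point is $\Phi$ itself, and so the directional derivative of $f_h^{\,\mathrm{sos}}$ at $G$ along $V$ coincides with the directional derivative of $f_h$ at $\Phi(G)$ along $\Phi(V)$. Iterating this identity gives
\[
(-1)^k D_{V_1}\cdots D_{V_k}\,f_h^{\,\mathrm{sos}}(G)\ =\ (-1)^k D_{\Phi(V_1)}\cdots D_{\Phi(V_k)}\, f_h(\Phi(G)),
\]
and substituting the integral formula \eqref{eq:derivative} from Proposition~\ref{prop:derivative} into the right-hand side yields
\[
(-1)^k D_{V_1}\cdots D_{V_k}\,f_h^{\,\mathrm{sos}}(G)\ =\ \frac{1}{\Gamma(1+(n+e)/d)}\int_{\R^n} h(\x)\,\Phi(V_1)(\x)\cdots \Phi(V_k)(\x)\,\exp(-\Phi(G)(\x))\,\textrm{d}\x.
\]

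The integrand is manifestly non-negative: $h\in\overline{\mathcal{C}_{e,n}}$ by assumption, and each factor $\Phi(V_i)(\x)=\m_{d/2}(\x)^{\mathsf T}V_i\,\m_{d/2}(\x)$ is non-negative (indeed positive off the origin) since $V_i\in\textrm{PD}_M$. This establishes \eqref{eq:CM} for $f_h^{\,\mathrm{sos}}$ and hence complete monotonicity. There is essentially no obstacle in this argument; the only point worth isolating is the cone inclusion $\Phi(\textrm{PD}_M)\subset \mathcal{C}_{d,n}$, which is exactly what transports complete monotonicity of $f_h$ back to the matrix side through the linear Gram parametrization~$\Phi$.
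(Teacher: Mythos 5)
Your argument is correct and follows essentially the same route as the paper: both exploit linearity of the Gram parametrization $G\mapsto \m_{d/2}(\x)^{\mathsf T}G\,\m_{d/2}(\x)$ to identify the matrix directional derivatives with directional derivatives of $f_h$ along the corresponding positive definite forms, and then invoke Proposition~\ref{prop:derivative}. Your write-up is just a more explicit version of the paper's one-line reduction, spelling out the cone inclusion and the chain-rule step.
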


\begin{proof}
The claim follows directly from Proposition \ref{prop:derivative}.
  Let $V_1,\dots, V_k\in \mathrm{PD}_M$ be any positive definite matrices. Then by the definition of a directional derivative,
  \begin{align}
    (-1)^kD_{V_1}\dots D_{V_k}f_h^{\,\mathrm{sos}}(G)\ =\ (-1)^k D_{v_1}\dots D_{v_k} f_h(g)\ \geq 0,
  \end{align}
  where $g$ is given by \eqref{eq:SOS} and $v_j(\x)=\m_{d/2}(\x)^{\mathsf T}\,V_j\,\m_{d/2}(\x)\in \mathcal{C}_{d,n}$ is the positive definite form with Gram matrix $V_j$, $j=1,\dots, k$.
\end{proof}

Next, we describe an integral representation of \eqref{eq:sos}, suggested by Theorem \ref{thm:BHWC}.
For this we consider the evaluation of a sum of squares \eqref{eq:SOS} at $\boldsymbol\ell\in \R^n$, 
\begin{align}\label{eq:Trace}
    g(\boldsymbol\ell)\ =\ \m_{d/2}(\boldsymbol\ell)^{\mathsf T}\, G\,\m_{d/2}(\boldsymbol\ell)\ =\ \textrm{Tr}\left(\m_{d/2}(\boldsymbol\ell) \m_{d/2}(\boldsymbol\ell)^{\mathsf T}\, G\right),
    \end{align}
 where $\Theta_{\boldsymbol\ell}=\m_{d/2}(\boldsymbol\ell)\m_{d/2}(\boldsymbol\ell)^{\mathsf T}$ is a $M\times M$ rank-one positive semi-definite matrix.
 This motivates us to study ``the matrix analogue" of the Veronese map \eqref{eq:Veronese},
 \begin{equation}\label{eq:Vsos}
 \begin{aligned}
     \boldsymbol\Theta_{d,n}^{\,\textrm{sos}}: \R^n\ &\rightarrow\ \textrm{PD}_M^{\,*},\\
     \boldsymbol\ell\ &\mapsto\ \m_{d/2}(\boldsymbol\ell)\m_{d/2}(\boldsymbol\ell)^{\mathsf T},
 \end{aligned}
 \end{equation}
 where $\textrm{PSD}_M=\textrm{PD}_M^{\,*}=\{\Theta\in \textrm{\normalfont{Mat}}(M,\R): \textrm{Tr}(\Theta\, G)\geq 0\ \forall\ G\in \textrm{PD}_M\}$ is the dual cone to $\textrm{PD}_M$, which coincides with the closure of $\textrm{PD}_M\subset \textrm{Mat}(M,\R)$ and consists of positive semi-definite matrices. 
 Here we identify the dual space $\textrm{Mat}(M,\R)^*$ and $\mathrm{Mat}(M,\R)$ with the help of \emph{the trace inner product} $\Theta, G\in \textrm{Mat}(M,\R)\mapsto \textrm{Tr}(\Theta \,G)$.
 The main result of this subsection is an analogue of \eqref{eq:int_rep} for SOS forms.
 
 \begin{thm}
The \emph{SOS} volume function \eqref{eq:sos} admits an integral representation
 \begin{align}\label{eq:SOSint_rep}
     f^{\,\textrm{\normalfont{sos}}}(G)\ =\ \int_{\textrm{\normalfont{PSD}}_{M} } \exp(-\textrm{\normalfont{Tr}}(\Theta\,G))\,\frac{\textrm{\normalfont{d}}\mu^{\textrm{\normalfont{sos}}}(\Theta)}{\Gamma\left(1+n/d\right)},
 \end{align}
 where $\mu^{\textrm{\normalfont{sos}}}$ is the push-forward measure of the Lebesgue measure on $\R^n$ under \eqref{eq:Vsos}.
 \end{thm}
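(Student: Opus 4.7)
The proof should essentially mirror the argument used for Theorem \ref{thm:CM}, but in the ``matrix world'' of Gram matrices rather than in $\H_{d,n}$. The key observation that makes everything go through is the trace identity \eqref{eq:Trace}, which reinterprets the evaluation $g(\boldsymbol\ell)$ of a sum of squares as the trace pairing between $G$ and the rank-one positive semi-definite matrix $\boldsymbol\Theta_{d,n}^{\,\textrm{sos}}(\boldsymbol\ell)=\m_{d/2}(\boldsymbol\ell)\m_{d/2}(\boldsymbol\ell)^{\mathsf T}$.

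The plan is as follows. First, I would observe that, by the definition \eqref{eq:sos} of $f^{\,\textrm{sos}}$ combined with the integral representation \eqref{eq:rep} of the volume function, one has
\[
f^{\,\textrm{sos}}(G)\ =\ f\bigl(\m_{d/2}(\x)^{\mathsf T}\,G\,\m_{d/2}(\x)\bigr)\ =\ \frac{1}{\Gamma(1+n/d)}\int_{\R^n}\exp\bigl(-\m_{d/2}(\boldsymbol\ell)^{\mathsf T}\,G\,\m_{d/2}(\boldsymbol\ell)\bigr)\,\textrm{d}\boldsymbol\ell.
\]
Note that for $G\in \textrm{PD}_M$ the form $g(\x)=\m_{d/2}(\x)^{\mathsf T}\,G\,\m_{d/2}(\x)$ is positive definite, so $g\in \mathcal{C}_{d,n}$ and \eqref{eq:rep} genuinely applies.

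Next, by the trace identity \eqref{eq:Trace}, the integrand becomes $\exp(-\textrm{Tr}(\boldsymbol\Theta_{d,n}^{\,\textrm{sos}}(\boldsymbol\ell)\,G))$. I would then apply the change-of-variables formula for push-forward measures (e.g., \cite[Thm.~3.6.1]{Bogachev}, as in the proof of Theorem \ref{thm:CM}) to the map \eqref{eq:Vsos}, which yields
\[
\int_{\R^n}\exp\bigl(-\textrm{Tr}(\boldsymbol\Theta_{d,n}^{\,\textrm{sos}}(\boldsymbol\ell)\,G)\bigr)\,\textrm{d}\boldsymbol\ell\ =\ \int_{\textrm{Mat}(M,\R)}\exp\bigl(-\textrm{Tr}(\Theta\,G)\bigr)\,\textrm{d}\mu^{\textrm{sos}}(\Theta).
\]
Since the image of $\boldsymbol\Theta_{d,n}^{\,\textrm{sos}}$ consists of rank-one positive semi-definite matrices, the support of $\mu^{\textrm{sos}}$ is contained in $\textrm{PSD}_M=\textrm{PD}_M^{\,*}$, and the integration domain can be restricted accordingly. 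Dividing by $\Gamma(1+n/d)$ produces exactly \eqref{eq:SOSint_rep}.

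The only things to verify carefully are the measurability of the map \eqref{eq:Vsos} (which is polynomial, hence continuous) and the integrability of the integrand (which follows from $g\in \mathcal{C}_{d,n}$ and the bound $g(\boldsymbol\ell)\geq g_{\min}\Vert\boldsymbol\ell\Vert^d$ used in Proposition \ref{prop:derivative}), both of which are routine. There is no real obstacle: the entire content of the theorem lies in recognizing that the trace identity converts the exponential of $g$ into the Laplace kernel on $(\textrm{Mat}(M,\R),\textrm{Tr}(\cdot\,\cdot))$ evaluated on the rank-one matrix $\boldsymbol\Theta_{d,n}^{\,\textrm{sos}}(\boldsymbol\ell)$, after which Bernstein--Hausdorff--Widder--Choquet is realized by the explicit push-forward measure $\mu^{\textrm{sos}}$.
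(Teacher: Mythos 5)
Your proposal is correct and follows essentially the same route as the paper's proof: apply the integral representation \eqref{eq:rep} to $g(\x)=\m_{d/2}(\x)^{\mathsf T}G\m_{d/2}(\x)$, rewrite the exponent via the trace identity \eqref{eq:Trace}, and invoke the push-forward change-of-variables formula \cite[Thm.~3.6.1]{Bogachev} for the map \eqref{eq:Vsos}. The extra remarks on measurability, integrability, and the support of $\mu^{\textrm{sos}}$ are sound but not strictly needed beyond what the paper records.
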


\begin{proof}
Denote by $\mu^{\textrm{sos}}$ the push-forward measure of the Lebesgue measure $\lambda$ on $\R^n$ under the map \eqref{eq:Vsos}. 
Then formula \eqref{eq:Trace} and \cite[Thm. 3.6.1]{Bogachev} give
\begin{equation}
 \begin{aligned}\int_{\R^n}\exp(-\m_{d/2}(\boldsymbol\ell)^{\mathsf T}\, G\,\m_{d/2}(\boldsymbol\ell))\,\textrm{d}\boldsymbol\ell\ &=\ \int_{\R^n}\exp(-\textrm{Tr}\left(\Theta_{\boldsymbol\ell}\, G\right)) \,\textrm{d}\lambda(\boldsymbol\ell)\\ 
 &=\ \int_{\textrm{PSD}_{M}}\exp(-\textrm{Tr}\left( \Theta\, G\right))\,\textrm{d}\mu^{\textrm{sos}}(\Theta),
 \end{aligned}
 \end{equation}
which combined with \eqref{eq:int} and \eqref{eq:sos} yields \eqref{eq:SOSint_rep}.
\end{proof}

\begin{rem}
Analogously, when $h(\x)=\m_{d/2}(\x)^{\mathsf T} H\m_{d/2}(\x)\in \Sigma_{d,n}$ is a sum of squares of degree $d$, one can show that
\begin{align}\label{eq:h_int} 
  f^{\,\mathrm{sos}}_h(g)\ =\ \int_{\textrm{\normalfont{PSD}}_M} \exp\left(-\textrm{\normalfont{Tr}}\left( \Theta\, G\right)\right)\frac{\textrm{\normalfont{Tr}}\left( \Theta\, H\right) \,\textrm{\normalfont{d}}\mu^{\textrm{\normalfont{sos}}}(\Theta)}{\Gamma(1+(n+d)/d)},
\end{align}
where $\mu^{\textrm{\normalfont{sos}}}$ is the push-forward measure of the Lebesgue measure on $\R^n$ under \eqref{eq:Vsos}. 
Thus, the unique Borel measure from Theorem \ref{thm:BHWC}, that is associated to the completely monotone function $f^{\,\mathrm{sos}}_h$ is proportional to $\textrm{\normalfont{Tr}}\left(\,\cdot\, H \right) \mu^{\textrm{\normalfont{sos}}}$. 
  \end{rem}

The case of quadratic ($d=2$) forms deserves a special attention.
In this case any $g\in \mathcal{C}_{d,n}$ can be written as $g(\x)=\x^{\mathsf T}G\x$ for a unique positive definite matrix $G$ (cf. \eqref{eq:SOS}).
By \eqref{eq:int} and elementary Gaussian integration, one obtains a closed form expression for the volume function of $g$  (see also \cite[p. 38]{Mor}).

\begin{prop}\label{prop:2d=2}
For any $n\geq 2$ we have
\begin{align*}
 f^{\,\mathrm{sos}}(G)\,=\, f(g)\,=\, \frac{\sqrt{\pi}^n}{\Gamma(1+n/2)}\frac{1}{\sqrt{\det(G)}}.
\end{align*}
\end{prop}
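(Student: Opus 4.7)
The plan is to use the integral representation \eqref{eq:int} with $d=2$ and then reduce the resulting Gaussian integral to the standard form by orthogonal diagonalization. Specifically, $f^{\,\mathrm{sos}}(G) = f(g)$ holds by the very definition \eqref{eq:sos} since, in the quadratic case, $\m_1(\x) = \x$ and thus $\m_1(\x)^{\mathsf T} G \,\m_1(\x) = \x^{\mathsf T} G \x = g(\x)$. So it suffices to evaluate $f(g)$.

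Starting from \eqref{eq:int} with $d=2$, I would write
\begin{align*}
f(g)\ =\ \frac{1}{\Gamma(1+n/2)}\int_{\R^n} \exp(-\x^{\mathsf T} G\,\x)\,\textrm{d}\x.
\end{align*}
Since $G \in \textrm{PD}_n$, it has a positive definite symmetric square root $G^{1/2}$. Applying the linear change of variables $\y = G^{1/2}\x$, whose Jacobian determinant is $\sqrt{\det(G)}$, one obtains
\begin{align*}
\int_{\R^n} \exp(-\x^{\mathsf T} G\,\x)\,\textrm{d}\x\ =\ \frac{1}{\sqrt{\det(G)}}\int_{\R^n}\exp(-\y^{\mathsf T}\y)\,\textrm{d}\y\ =\ \frac{\sqrt{\pi}^n}{\sqrt{\det(G)}},
\end{align*}
where the last equality follows from Fubini's theorem and the one-dimensional identity $\int_{\R}e^{-t^2}\textrm{d}t = \sqrt{\pi}$. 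Combining the two displays gives the claimed formula.

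There is no real obstacle here; the whole argument is a direct specialization of \eqref{eq:int} combined with a textbook Gaussian integral computation. The only thing to double-check is that $f^{\,\mathrm{sos}}(G)$ and $f(g)$ literally agree in degree $d=2$, which is automatic from \eqref{eq:sos} because the monomial vector $\m_{d/2}(\x)$ reduces to $\x$ itself.
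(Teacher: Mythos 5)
Your proposal is correct and follows exactly the route the paper intends: the paper simply cites \eqref{eq:int} together with ``elementary Gaussian integration,'' which is precisely what you carry out via the $\y = G^{1/2}\x$ substitution. The identification $f^{\,\mathrm{sos}}(G) = f(g)$ in degree $2$ is indeed immediate from \eqref{eq:sos} since $\m_1(\x)=\x$, and the Jacobian bookkeeping in your change of variables is handled correctly.
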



\begin{rem}
  Complete monotonicity of the function $G\mapsto (\det(G))^{-1/2}$ on the cone of positive definite matrices is a quite known fact, see \cite[pp. 355-356]{ScottSokal}. 
\end{rem}

We conclude this chapter with an explicit formula for \eqref{eq:hsos} in the case $d=2$. 

\begin{thm}
For any $n\geq 2$ and $h(\x)=\x^{\mathsf T} H\x$, $H\in \textrm{\normalfont{PSD}}_n$, we have
\begin{align}\label{eq:hsos_explicit}
  f_h^{\,\mathrm{sos}}(G)\ =\ \frac{\sqrt{\pi}^n}{2\,\Gamma\left(1+(n+2)/2\right)}\frac{\textrm{\normalfont{Tr}}(G^{-1}H)}{\sqrt{\det(G)}},\quad G\in \textrm{\normalfont{PD}}_n.
\end{align}
\end{thm}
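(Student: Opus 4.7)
The plan is to reduce the claim to a single Gaussian second-moment integral and then compute it by differentiating the closed-form expression from Proposition \ref{prop:2d=2}.

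First I would apply the integral representation \eqref{eq:repgen} with $d=2$ and $e=2$ to the positive definite quadratic forms $g(\x)=\x^{\mathsf T} G\x\in \mathcal{C}_{2,n}$ and $h(\x)=\x^{\mathsf T} H\x\in\overline{\mathcal{C}_{2,n}}$, which gives
\begin{equation*}
f_h^{\,\mathrm{sos}}(G)\ =\ f_h(g)\ =\ \frac{1}{\Gamma\left(1+(n+2)/2\right)}\int_{\R^n} \x^{\mathsf T}H\x\,\exp(-\x^{\mathsf T}G\x)\,\textrm{d}\x.
\end{equation*}
So everything reduces to showing that the Gaussian integral above equals $\frac{\sqrt{\pi}^n\,\textrm{Tr}(G^{-1}H)}{2\sqrt{\det(G)}}$.

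To evaluate that integral, I would perturb: since $\textrm{PD}_n$ is open, there is some $\varepsilon>0$ with $G+tH\in\textrm{PD}_n$ for all $t\in(-\varepsilon,\varepsilon)$, and Proposition \ref{prop:2d=2} (combined with \eqref{eq:rep}) gives
\begin{equation*}
\int_{\R^n}\exp\bigl(-\x^{\mathsf T}(G+tH)\x\bigr)\,\textrm{d}\x\ =\ \frac{\sqrt{\pi}^n}{\sqrt{\det(G+tH)}},\qquad t\in(-\varepsilon,\varepsilon).
\end{equation*}
Differentiating both sides at $t=0$ is legitimate by the same dominated-convergence argument used in the proof of Proposition \ref{prop:derivative} (the integrand is majorized by $|\x^{\mathsf T}H\x|\exp(-c\Vert\x\Vert^2)$ for some $c>0$). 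The left-hand derivative equals $-\int_{\R^n}\x^{\mathsf T}H\x\,\exp(-\x^{\mathsf T}G\x)\,\textrm{d}\x$, while Jacobi's formula $\frac{\textrm{d}}{\textrm{d}t}\det(G+tH)\big|_{t=0}=\det(G)\,\textrm{Tr}(G^{-1}H)$ yields
\begin{equation*}
\frac{\textrm{d}}{\textrm{d}t}\bigg|_{t=0}\frac{\sqrt{\pi}^n}{\sqrt{\det(G+tH)}}\ =\ -\frac{\sqrt{\pi}^n\,\textrm{Tr}(G^{-1}H)}{2\sqrt{\det(G)}}.
\end{equation*}
Equating the two expressions gives the required Gaussian integral, and substituting back delivers \eqref{eq:hsos_explicit}.

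There is no real obstacle: the only technical point is the interchange of differentiation and integration, handled exactly as in Proposition \ref{prop:derivative}. One could equivalently avoid Jacobi's formula altogether by computing the second moment matrix of the centered Gaussian with density proportional to $\exp(-\x^{\mathsf T}G\x)$ (covariance $\frac{1}{2}G^{-1}$) and summing against $H$; both routes lead to the same trace expression.
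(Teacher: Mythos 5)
Your proof is correct, and it takes a genuinely different route from the paper's. The paper evaluates $\int_{\R^n}\x^{\mathsf T}H\x\,\exp(-\x^{\mathsf T}G\x)\,\textrm{d}\x$ explicitly by spectral-decomposing both $G=\sum_i\lambda_i\c_i\c_i^{\mathsf T}$ and $H=\sum_j\eta_j\d_j\d_j^{\mathsf T}$, performing an orthogonal change of variables to diagonalize $G$, rescaling, computing one-dimensional Gaussian second moments, and then regrouping the resulting double sum $\sum_{i,j}\lambda_i^{-1}\eta_j(\c_i^{\mathsf T}\d_j)^2$ into $\textrm{Tr}(G^{-1}H)$ via trace identities. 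You instead reduce the same Gaussian integral to a one-parameter differentiation of the closed-form partition function $\sqrt{\pi}^n/\sqrt{\det(G+tH)}$ from Proposition \ref{prop:2d=2} and invoke Jacobi's formula, justifying the interchange of $\frac{\textrm{d}}{\textrm{d}t}$ and $\int$ exactly as in Proposition \ref{prop:derivative}. Both are rigorous; your argument is shorter and makes the appearance of $\textrm{Tr}(G^{-1}H)$ structurally transparent (it is simply the logarithmic derivative of $\det$), whereas the paper's computation is more hands-on and self-contained, not relying on Jacobi's formula. Your closing remark — that one can equivalently just read off the covariance $\tfrac{1}{2}G^{-1}$ of the centered Gaussian proportional to $\exp(-\x^{\mathsf T}G\x)$ and contract against $H$ — is also valid and is perhaps the most economical version of all.
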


\begin{proof}
  Spectral theorem applied to real symmetric matrices $G$ and $H$ yields decompositions $G=\sum_{i=1}^n \lambda_i\c_i \c_i^{\mathsf T}$ and $H=\sum_{j=1}^n \eta_j\,\d_j\d_j^{\mathsf T}$.  Here $\{\c_1,\dots, \c_d\} \subset \R^n$ and $\{\d_1,\dots, \d_n\}\subset \R^n$ are orthonormal bases consisting of eigenvectors of $G$ and, respectively, $H$, whose associated eigenvalues are $\lambda_1,\dots, \lambda_n>0$ and, respectively, $\eta_1,\dots, \eta_n\geq 0$. 
  Performing an orthogonal change of variables $\x=\sum_{k=1}^n y_k\c_k$ in the last integral of \eqref{eq:repgen}, we write
{\small\begin{equation*}\begin{aligned}
\int_{\R^n} \x^{\mathsf T} H\x \exp(-\x^{\mathsf T} G\x)\,\textrm{d}\x\ &=\ \int_{\R^n} \sum^n_{j=1} \eta_j \left(\sum_{k=1}^ny_k\c^{\mathsf T}_k\d_j\right)^2\exp\left(-\sum_{i=1}^n \lambda_i y_i^2\right)\,\textrm{d}\y \\
&=\ \int_{\R^n} \sum_{j=1}^n \eta_j \left(\sum_{k,\ell=1}^ny_ky_\ell\c^{\mathsf T}_k\d_j \c_\ell^{\mathsf T} \d_j\right)\exp\left(-\sum_{i=1}^n \lambda_i y_i^2\right)\,\textrm{d}\y\\
&=\ \int_{\R^n} \sum_{j=1}^n \eta_j\sum_{k=1}^n y_k^2\left(\c_k^{\mathsf T}\d_j\right)^2\exp\left(-\sum_{i=1}^n \lambda_i y_i^2\right)\,\textrm{d}\y,
  \end{aligned}
\end{equation*}}
where in the last step we use the fact that the integral $\int_{\R^n} y_ky_\ell \exp(-\sum_{i=1}^n \lambda_iy_i^2)\,\textrm{d}\y$ equals zero whenever $k\neq \ell$.
After one more change of variables $y_i=z_i/\sqrt{\lambda_i}$, $i=1,\dots, n$, and elementary Gaussian integration, the above integral reads
\begin{equation*}
  \begin{aligned}
   \sum_{i,j=1}^n \frac{\eta_j}{\lambda_i}
    \left(\c_i^{\mathsf T}\d_j\right)^2 \int_{\R^n} z_i^2 \exp\left(-\z^{\mathsf T}\z\right)\,\frac{\textrm{d}\z}{\sqrt{\lambda_1\cdots\lambda_n}}\ =\ \frac{\sqrt{\pi}^n}{2\sqrt{\det(G)}} \sum_{i,j=1}^n\frac{\eta_j}{\lambda_i}  \left(\c_i^{\mathsf T}\d_j\right)^2.
  \end{aligned}
\end{equation*}
The inverse matrix of $G\in\textrm{PD}_n$ is given by $G^{-1}=\sum_{i=1}^n\lambda^{-1}_i\c_i\c_i^{\mathsf T}$.
Using properties of the trace, we can rewrite the double sum in the last expression above as
\begin{align*} \sum_{i,j=1}^n\frac{\eta_j}{\lambda_i}  \left(\c_i^{\mathsf T}\d_j\right)^2\ &=\  \sum_{i,j=1}^n\frac{\eta_j}{\lambda_i}\textrm{Tr}\left((\c_i^{\mathsf T}\d_j)(\d_j^{\mathsf T}\c_i)\right)\
                                                                                            =\ \sum_{i,j=1}^n\lambda^{-1}_i\eta_j\textrm{Tr}\left(\c_i\c_i^{\mathsf T}\d_j\d_j^{\mathsf T}\right)\\
  &=\ \textrm{Tr}\left(\left(\sum_{i=1}^n\lambda^{-1}_i\c_i\c_i^{\mathsf T}\right)\left(\sum_{j=1}^n\eta_j\,\d_j\d_j^{\mathsf T}\right) \right)\ =\ \textrm{Tr}\left(G^{-1}H\right).
  \end{align*}
By gathering all together and combining with \eqref{eq:repgen} we obtain the claim \eqref{eq:hsos_explicit}.
\end{proof}

\section{Moment matrices of central Gaussian vectors and sums of squares}\label{sec:moments}


Our goal in this section is to prove Theorem \ref{thm:moments} and then derive Corollary \ref{cor:Nesterov}.
\begin{proof}[Proof of Theorem \ref{thm:moments}]
Let us consider the vector $\hat\m_{d/2}(\x)=\left(\sqrt{{d/2\choose \alpha}}\,\x^\alpha\right)_{\vert\alpha\vert={d/2}}$ of normalized monomials of degree $d/2$. 
The advantage of working with the normalized monomials consists in the fact that they (unlike $\x^\alpha$, $\vert\alpha\vert=d/2$) form an orthonormal basis of $\H_{d/2,n}$ with respect to the Bombieri inner product \eqref{eq:Bomb}. It is a well-known fact (see, e.g., \cite[p. 17]{KT2022}) that \eqref{eq:Bomb} is invariant under the standard action of $O(n)$ on $\H_{d/2,n}$ by changes of variables.
Thus, an orthogonal change of variables $\x\mapsto a\x$, $a\in O(n)$, induces a change of basis in $\H_{d/2,n}$ given by some ``big" orthogonal matrix $A\in O(M)$, $M={d/2+n-1 \choose n-1}$, that is,
\begin{align}\label{eq:transformation}
    \hat\m_{d/2}(a\x)\ =\ A \hat\m_{d/2}(\x).
    \end{align}
    The entries of the ``big" matrix $A\in O(M)$ are some forms of degree $d/2$ in the entries of the ``small" matrix $a\in O(n)$.
The matrix 
{\small\[\hat\M_d[Q]\ =\ \int_{\R^n} \hat\m_{d/2}(\y)\hat\m_{d/2}(\y)^{\mathsf T} p^{(d)}_{\,Q}(\y)\,\textrm{d}\y \ =\ \left(\int_{\R^n} \sqrt{{d/2\choose\alpha}}\sqrt{{d/2\choose \beta}} \y^{\alpha+\beta} p^{(d)}_{\,Q}(\y)\,\textrm{d}\y\right)\]}
of normalized moments of degree $d$ of $p_{\,Q}^{(d)}$ is related to $\M_d[Q]$ via the formula
\begin{align}\label{eq:relation} \hat\M_d[Q]\ =\ S \,\M_d[Q] \,S,
\end{align}
where 
\[ S\ =\ 
\begin{pmatrix}
\ddots & \cdots & 0 \\
\vdots & \sqrt{{d/2\choose \alpha}} & \vdots\\
0 & \cdots & \ddots
\end{pmatrix}\]
is the diagonal matrix of square roots of multinomial coefficients.
We now observe that  forms $\m_{d/2}(\x)^{\mathsf T} \M_d[Q]^{-1}\m_{d/2}(\x)$ and $\hat\m_{d/2}(\x)^{\mathsf T} \hat\M_d[Q]^{-1} \hat\m_{d/2}(\x)$ coincide.
Indeed, the relation \eqref{eq:relation} together with $\hat\m_{d/2}(\x)=S \m_{d/2}(\x)$ imply  
\begin{equation}\label{eq:usual=normalized}
\begin{aligned}
\m_{d/2}(\x)^{\mathsf T} \M_d[Q]^{-1}\m_{d/2}(\x)\ &=\ \hat\m_{d/2}(\x) S^{-1} \M_d[Q]^{-1} S^{-1}\hat\m_{d/2}(\x)\\ &=\ \hat\m_{d/2}(\x) \hat\M_d[Q]^{-1}\hat\m_{d/2}(\x).
\end{aligned}
\end{equation}
Next, we show that it is enough to prove the claim \eqref{eq:claim} in the case when $Q$ is diagonal. 
For this, let us choose an orthogonal $a\in O(n)$ so that the conjugate to $Q$ matrix $a^{\mathsf T} Q a=\Lambda$ is the  diagonal matrix of its eigenvalues $\lambda_1,\dots, \lambda_n>0$.
Using formula \eqref{eq:transformation}, we derive
\medskip
{\small\begin{equation}\label{eq:long_derivation}
    \begin{aligned}
    &\hat\m_{d/2}(\x)^{\mathsf T} \hat\M_d^{-1}[Q]\hat\m_{d/2}(\x)\\
    &=\ \hat\m_{d/2}(\x)^{\mathsf T}\left(\int_{\R^n}\hat\m_{d/2}(\y)\hat\m_{d/2}(\y)^{\mathsf T} \frac{\sqrt{\det(Q)}}{\sigma_d^n\sqrt{(2\pi)^n}}\exp\left(-\frac{\y^{\mathsf T} a\Lambda a^{\mathsf T}\y}{2\sigma_d^2}\right)\,\textrm{d}\y\right)^{-1}\hat\m_{d/2}(\x)\\
    &=\ \hat\m_{d/2}(\x)^{\mathsf T}\left(\int_{\R^n}\hat\m_{d/2}(a\z)\hat\m_{d/2}(a\z)^{\mathsf T} \frac{\sqrt{\det(\Lambda)}}{\sigma_d^n\sqrt{(2\pi)^n}}\exp\left(-\frac{\z^{\mathsf T} \Lambda \z}{2\sigma_d^2}\right)\,\textrm{d}\z\right)^{-1}\hat\m_{d/2}(\x)\\
    &=\ \hat\m_{d/2}(\x)^{\mathsf T}\left(\int_{\R^n}A\hat\m_{d/2}(\z)\hat\m_{d/2}(\z)^{\mathsf T} A^{\mathsf T} \frac{\sqrt{\det(\Lambda)}}{\sigma_d^n\sqrt{(2\pi)^n}}\exp\left(-\frac{\z^{\mathsf T} \Lambda \z}{2\sigma_d^2}\right)\,\textrm{d}\z\right)^{-1}\hat\m_{d/2}(\x)\\
    &=\ \hat\m_{d/2}(\x)^{\mathsf T} A\, \hat\M_d[\Lambda]^{-1}A^{\mathsf T}\hat\m_{d/2}(\x)\ =\ \hat\m_{d/2}(a^{\mathsf T}\x)^{\mathsf T} \hat\M_d[\Lambda]^{-1} \hat\m_{d/2}(a^{\mathsf T}\x),
\end{aligned}\end{equation}}\noindent
where we use orthogonality of matrices $a\in O(n)$, $A\in O(N)$, linearity of the integral and the fact that $\hat\m_{d/2}(a^{\mathsf T}\x)=A^{\mathsf T}\hat\m_{d/2}(\x)$.
Therefore, \eqref{eq:claim} holds for $\Lambda$ if and only if it holds for $Q=a\Lambda a^{\mathsf T}$, since by the last derivation and \eqref{eq:usual=normalized} we have
\begin{align*}
\left(\x^{\mathsf T} Q\x\right)^d\ &=\ \left(\x^{\mathsf T}a\Lambda a^{\mathsf T}\x\right)^d\ =\ \left((a^{\mathsf T}\x)^{\mathsf T} \Lambda (a^{\mathsf T}\x)\right)^d\quad \textrm{and}\\
\m_{d/2}(\x)^{\mathsf T} \M_d[Q]^{-1}\m_{d/2}(\x)\ &=\ \hat\m_{d/2}(\x)^{\mathsf T} \hat\M_d[Q]^{-1}\hat\m_{d/2}(\x)\\ &=\ \hat\m_{d/2}(a^{\mathsf T}\x)^{\mathsf T}\hat\M_d[\Lambda]^{-1}\hat\m_{d/2}(a^{\mathsf T}\x)\\ 
&=\ \m_{d/2}(a^{\mathsf T}\x)^{\mathsf T} \M_d[\Lambda]^{-1}\m_{d/2}(a^{\mathsf T}\x).
\end{align*}

Now, we reduce proving the claim \eqref{eq:claim} for a diagonal matrix $Q=\Lambda$ to proving it for the identity matrix $Q=\mathrm{Id}$. 
Since the eigenvalues of $Q=\Lambda$ are positive we can write them as $\lambda_i=\mu_i^2$ for some $\mu_i\in \R$, $i=1,\dots, n$. 
Let us observe that, under the coordinate-wise multiplication $\x=(x_1,\dots,x_n)\mapsto \mu*\x = (\mu_1 x_1,\dots, \mu_n x_n)$ by real numbers $\mu_1,\dots, \mu_n\in \R$, the vector of monomials transforms as
\begin{align}
    \m_{d/2}(\mu*\x)\ =\ \left( \mu^\alpha \x^\alpha \right)_{\vert\alpha\vert=d}\ =\  \m_{d/2}(\mu)*\m_{d/2}(\x)\ =\ \mathcal{N}(\mu)\, \m_{d/2}(\x),
\end{align}
where $\mathcal{N}(\mu)$ is the diagonal matrix with numbers $\mu^\alpha$, $\vert \alpha\vert=d$, on its diagonal.
We relate the forms $\m_{d/2}(\x)^{\mathsf T}\M_d[Q]^{-1}\m_{d/2}(\x)$ with $Q=\Lambda$ and $Q=\mathrm{Id}$ by 
{\small \begin{align*}
    &\m_{d/2}(\x)^{\mathsf T}\M_d[\Lambda]^{-1}\m_{d/2}(\x)\\
    &=\, \m_{d/2}(\x)^{\mathsf T}\left(\int_{\R^n} \m_{d/2}(\y)\m_{d/2}(\y)^{\mathsf T} \frac{\sqrt{\det(\Lambda)}}{\sigma_d^n\sqrt{(2\pi)^n}} \exp\left(-\frac{(\mu*\y)^{\mathsf T}(\mu*\y)}{2\sigma_d^2}\right)\,\textrm{d}\y\right)^{-1}\m_{d/2}(\x)\\
    &=\, \m_{d/2}(\x)^{\mathsf T}\left(\int_{\R^n} \m_{d/2}(\mu^{-1}*\z)\m_{d/2}(\mu^{-1}*\z)^{\mathsf T}  \exp\left(-\frac{\z^{\mathsf T}\z}{2\sigma_d^2}\right)\frac{\textrm{d}\z}{\sigma_d^n\sqrt{(2\pi)^n}}\right)^{-1}\m_{d/2}(\x)\\
    &=\, \m_{d/2}(\x)^{\mathsf T}\left(\int_{\R^n} \mathcal{N}(\mu^{-1})\m_{d/2}(\z)\m_{d/2}(\z)^{\mathsf T} \mathcal{N}(\mu^{-1}) \exp\left(-\frac{\z^{\mathsf T}\z}{2\sigma_d^2}\right) \frac{\textrm{d}\z}{\sigma_d^n\sqrt{(2\pi)^n}}\right)^{-1}\hspace{-0.3cm}\m_{d/2}(\x)\\
    &=\, \m_{d/2}(\x)^{\mathsf T} \mathcal{N}(\mu)^{\mathsf T} \M_d[\mathrm{Id}]^{-1} \mathcal{N}(\mu) \m_{d/2}(\x)\, =\, \m_{d/2}(\mu*\x)^{\mathsf T} \M_d[\mathrm{Id}]^{-1} \m_{d/2}(\mu*\x),
    \end{align*}}\noindent
    where we perform a change of variables $\z=\mu*\y$, use linearity of the integral and the formula $\mathcal{N}(\mu^{-1})=\mathcal{N}(\mu)^{-1}$ with $\mu^{-1}=(\mu_1^{-1},\dots,\mu_n^{-1})$.
This together with the formula $(\x^{\mathsf T}\Lambda \x)^d=((\mu*\x)^{\mathsf T}(\mu*\x))^d$ imply that \eqref{eq:claim} holds for a diagonal matrix $Q=\Lambda$ if and only if it holds for the identity matrix $Q=\mathrm{Id}$.
 
To prove \eqref{eq:claim} for $Q=\mathrm{Id}$, we first note that the form $\m_{d/2}(\x)^{\mathsf T}\M_d[\mathrm{Id}]^{-1}\m_{d/2}(\x)$ is invariant under orthogonal changes of variables.
Indeed, this property follows from formula \eqref{eq:long_derivation} with $Q=\Lambda=\mathrm{Id}$ and \eqref{eq:usual=normalized}, since (by definition) $a\,\mathrm{Id}\,a^{\mathsf T}=\mathrm{Id}$ holds for all orthogonal matrices $a\in O(n)$. As an $O(n)$-invariant form of degree $d$ is proportional to $(\x^{\mathsf T}\x)^{d/2}$ (see, e.g., \cite[Lemma $2.1$]{KL2020}), we obtain
\[\m_{d/2}(\x)^{\mathsf T}\M_d[\mathrm{Id}]^{-1}\m_{d/2}(\x)\ =\ c\,(\x^{\mathsf T}\x)^{d/2}\] for some constant $c\in \R^n$.
We now integrate this equality  against the measure $p^{(d)}_{\,\mathrm{Id}}(\x)\,\textrm{d}\x$. Using linearity of the integral and properties of the trace and performing elementary integration, we compute
\begin{align*}
    {d/2+n-1\choose n-1}\ &=\ \textrm{Tr}\left(\M_d[\mathrm{Id}]^{-1}\M_d[\mathrm{Id}]\right)\ =\ \sum_{\vert \alpha\vert=\vert\beta\vert=d/2} \M_d[\mathrm{Id}]^{-1}_{\alpha\beta} \M_d[\mathrm{Id}]_{\alpha\beta}\\ 
    &=\ \int_{\R^n} \sum_{\vert\alpha\vert=\vert\beta\vert=d/2} \M_d[\mathrm{Id}]^{-1}_{\alpha\beta}\, \x^{\alpha+\beta}\, p^{(d)}_{\,\mathrm{Id}}(\x)\,\textrm{d}\x\\ 
    &=\ \int_{\R^n} \m_{d/2}(\x)^{\mathsf T}\M_d[\mathrm{Id}]^{-1}\m_{d/2}(\x)\,p^{(d)}_{\,\mathrm{Id}}(\x)\,\textrm{d}\x\\
    &=\ c\int_{\R^n} (\x^{\mathsf T}\x)^{d/2} \,p^{(d)}_{\,\mathrm{Id}}(\x)\,\textrm{d}\x\\ 
    &=\ c\,\sigma_d^d \frac{1}{\sqrt{(2\pi)^n}}\int_{\R^n} (\x^{\mathsf T}\x)^{d/2} \exp\left(-\frac{\x^{\mathsf T}\x}{2}\right)\,\textrm{d}\x\\
    &=\ c\,\sigma_d^d \,\frac{\vol( \mathbb{S}^{n-1})}{\sqrt{(2\pi)^n}}\int_0^\infty r^{d+n-1} \exp\left(-\frac{r^2}{2}\right)\,\textrm{d}r\\
    &=\ c\, \frac{{d/2+n-1\choose n-1}}{\sqrt{2}^d\prod_{i=0}^{d/2-1} \left(\frac{n}{2}+i\right)} \sqrt{2}^d\frac{\Gamma\left(\frac{d+n}{2}\right)}{\Gamma\left(\frac{n}{2}\right)}\ =\ c {d/2+n-1\choose n-1},
\end{align*}
where we use the formula $\vol(\mathbb{S}^{n-1})=2\sqrt{\pi}^n/\,\Gamma\left(n/2\right)$ for the volume 
of the unit sphere $\mathbb{S}^{n-1}$.
This derivation yields $c=1$ and hence completes our proof of \eqref{eq:claim}.

It remains to prove that
\begin{align}\label{eq:rep_local}
\M_d[Q]\ =\ \frac{\int_{\R^n}\m_{d/2}(\x)\m_{d/2}(\x)^{\mathsf T} \exp\left(-k(\x^{\mathsf T}Q\x)^{d/2}\right)\textrm{d}\x}{\int_{\R^n} \exp\left(-k(\y^{\mathsf T}Q\y)^{d/2}\right)\textrm{d}\y}
\end{align}
is the matrix of degree $d$ moments of the Gaussian-like density \eqref{eq:P_G}, associated to the \textrm{SOS} form $g(\x)=(\x^{\mathsf T} Q\x)^{d/2}$. Applying a change of variables $\y=\sqrt{2}\sigma_d\, \z$ in the definition \eqref{eq:M_d_def} of $\M_d[Q]$ and then using \cite[(20)]{nonGaussian}, we write 
{\small\begin{align*}
\M_d[Q]\ &=\ \frac{\sqrt{\det(Q)}}{\sigma_d^n\sqrt{(2\pi)^n}}\int_{\R^n} \m_{d/2}(\y)\m_{d/2}(\y)^{\mathsf T} \exp\left(-\frac{\y^{\mathsf T} Q \y}{2\sigma_d^2}\right)\,\textrm{d}\y \\
&=\ \frac{\sqrt{2}^{d}\sigma_d^d\sqrt{\det(Q)}}{\sqrt{\pi^n}} \int_{\R^n}\m_{d/2}(\z)\m_{d/2}(\z)^{\mathsf T} \exp\left(-\z^{\mathsf T}Q \z\right)\,\textrm{d}\z\\
&=\ \frac{\sqrt{2}^{d}\sigma_d^d\sqrt{\det(Q)}}{\sqrt{\pi^n}}\Gamma\left(1+\frac{n+d}{2}\right) \int_{\left\{\z^{\mathsf T}Q\z\leq 1\right\}} \m_{d/2}(\z)\m_{d/2}(\z)^{\mathsf T}\, \textrm{d}\z\\
&=\ \frac{\sqrt{2}^{d}\sigma_d^d\sqrt{\det(Q)}}{\sqrt{\pi^n}}\Gamma\left(1+\frac{n+d}{2}\right) \int_{\left\{(\z^{\mathsf T}Q\z)^{d/2}\leq 1\right\}} \m_{d/2}(\z)\m_{d/2}(\z)^{\mathsf T}\, \textrm{d}\z\\
&=\ \frac{\sqrt{2}^{d}\sigma_d^d\sqrt{\det(Q)}}{\sqrt{\pi^n}}\frac{\Gamma\left(1+\frac{n+d}{2}\right)}{\Gamma\left(1+\frac{n+d}{d}\right)} \int_{\R^n}\m_{d/2}(\z)\m_{d/2}(\z)^{\mathsf T} \exp\left(-(\z^{\mathsf T}Q \z)^{d/2}\right)\,\textrm{d}\z\\
&=\ \frac{k^{1+\frac{n}{d}}\sqrt{2}^{d}\sigma_d^d\sqrt{\det(Q)}}{\sqrt{\pi^n}}\frac{d\,\Gamma\left(\frac{n+d}{2}\right)}{2\,\Gamma\left(\frac{n+d}{d}\right)} \int_{\R^n}\m_{d/2}(\z)\m_{d/2}(\z)^{\mathsf T} \exp\left(-k(\y^{\mathsf T}Q \y)^{d/2}\right)\,\textrm{d}\y,
\end{align*}}\noindent
where in the last step we change variables $\z=k^{1/d}\y$. Now, using the definition of $\sigma_d$ and formulas $k^{-1}=2{d/2+n-1\choose n}={d/2+n-1\choose n-1}d/n$, $\vol(\mathbb{S}^{n-1})=2\sqrt{\pi^n}/\Gamma(n/2)$, we simplify the constant in front of the last integral and write
\begin{align}\label{eq:M_d_local}
    \M_d[Q]\ &=\ \frac{k^{\frac{n}{d}} n \sqrt{\det(Q)}}{\vol(\mathbb{S}^{n-1})\Gamma\left(\frac{n+d}{d}\right)}  \int_{\R^n}\m_{d/2}(\z)\m_{d/2}(\z)^{\mathsf T} \exp(-k(\y^{\mathsf T}Q \y)^{d/2})\,\textrm{d}\y.
\end{align}
Performing a sequence of changes of variables and using basic properties of Gamma function, one can see that the mass of the measure $\exp(-k(\y^{\mathsf T}Q \y)^{d/2})\,\textrm{d}\y$ equals
\begin{align*}
    \int_{\R^n} \exp\left(-k(\y^{\mathsf T}Q \y)^{d/2}\right)\,\textrm{d}\y\ =\ \frac{\vol(\mathbb{S}^{n-1})\Gamma\left(\frac{n+d}{d}\right)}{k^{\frac{n}{d}}n\sqrt{\det(Q)}},
\end{align*}
which together with \eqref{eq:M_d_local} yields the claimed representation \eqref{eq:rep_local} for $\M_d[Q]$.\end{proof}

We conclude this section with a proof of Corollary \ref{cor:Nesterov}.

\begin{proof}[Proof of Corollary \ref{cor:Nesterov}]
Since the matrix $Q\in \textrm{PD}_n$ is positive definite, we have that $\y^{\mathsf T}Q\y>0$ for non-zero $\y\in \R^n\setminus \{\mathbf{0}\}$. We first perform spherical change of variables $\y=r\,\z$, $r\in (0,\infty)$, $\z\in \mathbb{S}^{n-1}$, in the integral \eqref{eq:M_d_def} defining $\M_d[Q]$ and then, after changing to $t:=\frac{\z^{\mathsf T}Q\z}{2\sigma_d^2} r^2$ in the inner integral, we integrate out the $t$ variable:
{\small \begin{align*}
\M_d[Q]\ &=\ \frac{\sqrt{\det(Q)}}{\sigma_d^n\sqrt{(2\pi)^n}}\int_{\R^n} \m_{d/2}(\y)\m_{d/2}(\y)^{\mathsf T} \exp\left(-\frac{\y^{\mathsf T} Q \y}{2\sigma_d^2}\right)\textrm{d}\y   \\
&=\ \frac{\sqrt{\det(Q)}}{\sigma_d^n\sqrt{(2\pi)^n}}\int_{\mathbb{S}^{n-1}}  \m_{d/2}(\z)\m_{d/2}(\z)^{\mathsf T} \int_0^\infty r^{d+n-1} \exp\left(-\frac{\z^{\mathsf T} Q \z}{2\sigma_d^2}r^2\right)\textrm{d}r\,\textrm{d}\mathbb{S}^{n-1}(\z)   \\
&=\ \sqrt{\det(Q)} \frac{\sqrt{2}^{d-2}\sigma_d^d}{\sqrt{\pi}^n} \Gamma\left(\frac{d+n}{2}\right) \int_{\mathbb{S}^{n-1}}\frac{\m_{d/2}(\z)\m_{d/2}(\z)^{\mathsf T}}{\sqrt{\z^{\mathsf T} Q\z}^{\,d+n}}\,\textrm{d}\mathbb{S}^{n-1}(\z)\\
&= \sqrt{\det(Q)} {d/2+n-1\choose n-1} \frac{\Gamma\left(\frac{n}{2}\right)}{2\sqrt{\pi}^n} \int_{\mathbb{S}^{n-1}}\frac{\m_{d/2}(\z)\m_{d/2}(\z)^{\mathsf T}}{\sqrt{\z^{\mathsf T} Q\z}^{\,d+n}}\,\textrm{d}\mathbb{S}^{n-1}(\z).
\end{align*}}
This, together with the formula $\vol(\mathbb{S}^{n-1})=2\sqrt{\pi}^n/\,\Gamma\left(n/2\right)$ for the volume of the sphere, implies that $\M_d[Q]$ is the matrix of degree $d$ moments of the measure $\nu$ on $\mathbb{S}^{n-1}$ from the statement of Corollary \ref{cor:Nesterov}. The rest follows from Theorem \ref{thm:moments}.
\end{proof}

\section{Comparison of $L^1$- and $L^2$-norms associated to a PD form}\label{sec:L2L1}

In this section we prove an extremal property of positive definite forms, that is stated in Theorem \ref{thm:L2L1}. 

\begin{proof}[Proof of Theorem \ref{thm:L2L1}]
For a positive definite form $g\in \mathcal{C}_{d,n}$ and any $h\in \H_{d,n}$ one has by formula $(56)$ in \cite[Lemma 3]{nonGaussian} that
\begin{align*}\label{eq:test}
\int_{\G} \vert h(\x)\vert  g(\x)\,\textrm{d}\x &=\ \frac{n+d}{n+2d}\int_{\G} \vert h(\x)\vert\, \textrm{d}\x\ =\ \frac{n+d}{n+2d}\Vert h\Vert_{L^1(\mu_g)}
\end{align*}
and in particular with $h=g$ one obtains $\Vert g\Vert^2_{L^2(\mu_g)} = \frac{n+d}{n+2d} \Vert g\Vert_{L^1(\mu_g)}$.

Let now $h\in \H_{d,n}$ be such that $\Vert h\Vert_{L^1(\mu_g)}=\Vert g\Vert_{L^1(\mu_g)}$. Then we derive
\begin{align*}
    0\  &\leq\  \Vert 
    \vert h\vert- g\Vert^2_{L^2(\mu_g)}\ =\ \Vert h\Vert^2_{L^2(\mu_g)} - 2\int_{\G} \vert h(\x)\vert g(\x)\, \textrm{d}\x+\Vert g\Vert^2_{L^2(\mu_g)}\\
    &=\  \Vert h\Vert^2_{L^2(\mu_g)}-2\frac{n+d}{n+2d}\Vert h\Vert_{L^1(\mu_g)}+\frac{n+d}{n+2d}\Vert g\Vert_{L^1(\mu_g)}\\
    &=\  \Vert h\Vert^2_{L^2(\mu_g)} - \frac{n+d}{n+2d}\Vert g\Vert_{L^1(\mu_g)}\ =\ \Vert h\Vert^2_{L^2(\mu_g)}-\Vert g\Vert^2_{L^2(\mu_g)},
\end{align*}
which yields the claim.
Moreover, if $\Vert h\Vert_{L^2(\mu_g)}=\Vert g\Vert_{L^2(\mu_g)}$, we necessarily have that $\Vert \vert h\vert- g\Vert_{L^2(\mu_g)}=0$ and hence either $h= g$ or $h=-g$.
\end{proof}

\section{Non-negative forms with sublevel sets of finite volume}\label{sec:volume}

In this section we prove Theorems \ref{thm:binary} and \ref{thm:generic} and start with some auxiliary discussion.
Let $g\in \overline{\mathcal{C}_{d,n}}$ be a non-negative form and consider the dehomogenized non-negative polynomial $\tilde g(\y)\ =\ g(\y,1)$, where $\y=(y_1,\dots, y_{n-1})\in \R^{n-1}$. 
We consider the following change of variables on $\{\x\in\R^n: x_n\neq 0\}$, 
\[ \x\ =\ (x_1,\dots, x_{n-1}, x_n)\ =\ r(y_1,\dots, y_{n-1}, 1),\quad (\y,r)\in \R^{n-1}\times \left(\R\setminus \{0\}\right),\]
which allows us to write $g(\x)=r^d\tilde g(\y)$.
Then the sublevel set of $g$ is expressed as \[\G\ =\ \{g(\x)\leq 1\}\ =\ \left\{(\y,r)\in \R^{n-1}\times \left(\R\setminus \{0\}\right): -\tilde g(\y)^{-1/d}\leq r\leq \tilde g(\y)^{-1/d}\right\}\]
and, as a consequence, its Lebesgue volume equals
\begin{align}\label{eq:vol_new_coord} 
f(g)\ =\ \int_{\G} \textrm{d}\x\ =\ \int_{\R^{n-1}} \int_{-\tilde g(\y)^{-1/d}}^{\tilde g(\y)^{-1/d}} \vert r\vert^{n-1} \textrm{d}r\,\textrm{d}\y\ =\ \frac{2}{n}\int_{\R^{n-1}} \frac{1}{\tilde g(\y)^{n/d}}\, \textrm{d}\y.
\end{align}
As we show in the following lemma, the question about finiteness of $f(g)$ has a ``local nature", at least for some class of non-negative forms $g\in \overline{\mathcal{C}_{d,n}}$.

\begin{lem}\label{lem:crit}
Let $\tilde g\in \R[y_1,\dots, y_{n-1}]$ be a non-negative polynomial of degree $d$ with only isolated real zeros $\y^{(1)},\dots \y^{(m)}\in \R^{n-1}$ and whose degree $d$ part is positive definite. 
Then for any $\alpha>\frac{n-1}{d}$ the integral $\int_{\R^{n-1}} \tilde g(\y)^{-\alpha}\,\textrm{\normalfont{d}}\y$ is finite if and only if integrals $\int_{U_i} \tilde g(\y)^{-\alpha}\,\textrm{\normalfont{d}}\y$ over some neighborhoods $U_i$ of $\y^{(i)}$, $i=1,\dots, m$, are finite.
\end{lem}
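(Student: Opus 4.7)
The plan is to partition $\R^{n-1}$ into three pieces and to show that on two of them the integrand $\tilde g^{-\alpha}$ is automatically integrable, so that finiteness of the full integral reduces to a local question near the isolated real zeros.

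First, I would fix pairwise disjoint open neighborhoods $U_i\ni \y^{(i)}$ and a radius $R>0$ large enough that the closed ball $\overline{B_R}\subset\R^{n-1}$ contains every $\overline{U_i}$. This yields the decomposition
\[
\int_{\R^{n-1}} \tilde g(\y)^{-\alpha}\,\textrm{d}\y \;=\; \sum_{i=1}^m \int_{U_i} \tilde g^{-\alpha}\,\textrm{d}\y \;+\; \int_{\overline{B_R}\setminus\bigcup_i U_i} \tilde g^{-\alpha}\,\textrm{d}\y \;+\; \int_{\R^{n-1}\setminus \overline{B_R}} \tilde g^{-\alpha}\,\textrm{d}\y.
\]
The aim is to show that the last two integrals are always finite, independent of the behaviour of $\tilde g$ near its zeros.

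For the middle ``bulk'' piece, the domain $\overline{B_R}\setminus\bigcup_i U_i$ is compact and, by the hypothesis that $\y^{(1)},\dots,\y^{(m)}$ are the only real zeros of $\tilde g$ and all lie inside $\bigcup_i U_i$, the polynomial $\tilde g$ is strictly positive there. Continuity forces $\tilde g$ to attain a positive minimum on this compact set, so $\tilde g^{-\alpha}$ is bounded and the integral is finite.

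For the exterior piece, I would use the positive definiteness of the degree $d$ part $g_d$ of $\tilde g$. This gives $g_d(\y)\geq c\|\y\|^d$ for some $c>0$. Since the lower order terms of $\tilde g$ are $O(\|\y\|^{d-1})$, after possibly enlarging $R$ one obtains $\tilde g(\y)\geq \tfrac{c}{2}\|\y\|^d$ for $\|\y\|\geq R$. Passing to polar coordinates dominates $\tilde g(\y)^{-\alpha}$ by a constant multiple of $\|\y\|^{-d\alpha}$, reducing convergence to that of the radial integral $\int_R^\infty r^{n-2-d\alpha}\,\textrm{d}r$, which converges precisely when $d\alpha > n-1$, i.e.\ exactly under the standing hypothesis $\alpha > (n-1)/d$.

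Combining the two bounds, the only potentially divergent contribution comes from the neighborhoods $U_i$, giving the claimed equivalence (and, since the $U_i$ are disjoint, the finiteness of the sum is equivalent to the finiteness of each summand). No step presents a genuine obstacle; the one thing to notice is that the threshold $\alpha > (n-1)/d$ is tight for the exterior estimate, so the hypothesis cannot be weakened in general.
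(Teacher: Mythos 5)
Your proof is correct and follows essentially the same route as the paper: decompose $\R^{n-1}$ into the neighborhoods $U_i$, a compact set avoiding the zeros, and the exterior of a large ball, then use continuity on the compact piece and positive definiteness of the top-degree part together with polar coordinates on the exterior. The threshold analysis $d\alpha > n-1$ matches the paper's estimate exactly.
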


\begin{proof}
The only if direction is obvious, since $\int_{U} \tilde g(\y)^{-\alpha}\,\textrm{\normalfont{d}}\y\leq \int_{\R^{n-1}} \tilde g(\y)^{-\alpha}\,\textrm{\normalfont{d}}\y$ for any open subset $U\subseteq \R^{n-1}$. 

Let us now consider such open neighborhoods $U_i$ of $\y^{(i)}$, $i=1,\dots, m$, that satisfy $\int_{U_i} \tilde g(\y)^{-\alpha}\,\textrm{d}\y<\infty$ and take a ball $B_R=\left\{\y\in \R^{n-1}:\vert \y\vert^2=\sum_{i=1}^{n-1} y_i^2< R^2\right\}$ that contains $\y^{(1)},\dots, \y^{(m)}$. 
First, we show that the integral of $\tilde g^{-\alpha}$ over the complement of $B_R$ is finite for large $R>0$.
By our assumption, the degree $d$ homogeneous part $\tilde g_d$ of $\tilde g = \sum_{j=0}^d \tilde g_j$ is a positive definite form and hence $\tilde g_d(\y)\geq \varepsilon_d \vert \y\vert^d$ holds for some $\varepsilon_d>0$. 
For $j=0,1,\dots, d-1$ the degree $j$ homogeneous part $\tilde g_j$ of $\tilde g$ satisfies $\vert \tilde g_j(\y)\vert \leq \varepsilon_j\vert \y\vert^j$ for some $\varepsilon_j>0$.\footnote{The constant $\varepsilon_j$, $j=0,1,\dots, d-1$, can be chosen to be the maximum value of the restriction of $\vert \tilde g_j\vert$ to the sphere $\mathbb{S}^{n-2}\subset \R^{n-1}$ and $\varepsilon_d$ is a positive constant not greater than the minimum of $\tilde g_d$ over $\mathbb{S}^{n-2}$.} We put all these inequalities together and obtain
\[
\tilde g(\y)\ =\ \tilde g_d(\y)+\sum_{j=0}^{d-1} \tilde g_j(\y)\ \geq\ \tilde g_d(\y)-\sum_{j=0}^{d-1}\vert \tilde g_j(\y)\vert\ \geq \varepsilon_d \vert \y\vert^d-\sum_{j=0}^{d-1} \varepsilon_j \vert \y\vert^j,\quad \y\in \R^{n-1}.
\]
For sufficiently large $R>0$ and all points $\y\in \R^{n-1}$ with $\vert\y\vert\geq R$ the rightmost expression in the last formula can be estimated as
\begin{align}\label{eq:est_integrand}
\tilde g(\y)\ \geq\ \varepsilon_d \vert \y\vert^d-\sum_{j=0}^{d-1} \varepsilon_j \vert \y\vert^j\ \geq\ \varepsilon\vert \y\vert^d
\end{align}
for some $\varepsilon>0$.
Using \eqref{eq:est_integrand} and writing $\y=r\,\z\in \R^{n-1}\setminus B_R$, $\z\in \mathbb{S}^{n-2}$, $r\geq R$, in spherical coordinates, we estimate the integral,
\begin{align}\label{eq:integral_bound}
\int_{\R^{n-1}\setminus B_R} \frac{1}{\tilde g(\y)^{\alpha}}\,\textrm{\normalfont{d}}\y\ \leq \int_{\R^{n-1}\setminus B_R} \frac{1}{\varepsilon^{\alpha} \vert \y\vert^{d\alpha}}\,\textrm{d}\y\ =\  \frac{\textrm{vol}(\mathbb{S}^{n-2})}{\varepsilon^{\alpha}} \int_{R}^{\infty} \frac{1}{r^{d\alpha-n+2}}\,\textrm{d}r<\infty,
\end{align}
where convergence of the last integral follows from the condition $\alpha>(n-1)/d$. 
Since the function $\tilde g^{-\alpha}$ is bounded over the compact set $K=\overline{B_R\setminus \left(\bigcup_{i=1}^m U_i\right)}$, the integral $\int_{K} \tilde g(\y)^{-\alpha}\,\textrm{d}\y$ is finite.
This, together with \eqref{eq:integral_bound} and finiteness of integrals $\int_{U_i} \tilde g(\y)^{-\alpha}\,\textrm{d}\y$, $i=1,\dots, m$, finally yield
\[
\int_{\R^{n-1}} \frac{1}{\tilde g(\y)^{\alpha}}\,\textrm{\normalfont{d}}\y\ \leq\ \int_{K} \frac{1}{\tilde g(\y)^{\alpha}}\,\textrm{\normalfont{d}}\y +  \int_{\R^{n-1}\setminus B_R} \frac{1}{\tilde g(\y)^{\alpha}}\,\textrm{\normalfont{d}}\y +\sum_{i=1}^m \int_{U_i} \frac{1}{\tilde g(\y)^{\alpha}}\,\textrm{\normalfont{d}}\y\ <\ \infty.
\]
\end{proof}

\begin{rem}\label{rem:reduction}
For any $g\in \overline{\mathcal{C}_{d,n}}$ the form $g(\y,0)$ is the degree $d$ part of the (inhomogeneous) polynomial $\tilde g\in \R[y_1,\dots, y_{n-1}]$ defined by $\tilde g(\y)=g(\y,1)$.
Therefore, the degree $d$ part of $\tilde g$ is positive definite if and only if a non-negative form $g\in \overline{\mathcal{C}_{d,n}}$ has no zeros in  $\{(\y,0)\in \R^{n}: \y\neq \mathbf{0}\}$.
\end{rem}

For a non-negative binary form $g\in \overline{\mathcal{C}_{d,2}}$, the non-negative univariate polynomial $\tilde g\in \R[y]$ has only isolated real zeros, each of which is of even order. Moreover, $\tilde g$ is of degree $d$ if and only if $g$ does not vanish at $(1,0)$. This can be achieved by composing $g$ with a rotation of $\R^2$. Under such an operation the sublevel set $\G\subset \R^2$ of $g$ gets rotated and, in particular, its Lebesgue volume remains preserved. 
Thus, by \eqref{eq:vol_new_coord} and Lemma \ref{lem:crit}, finiteness of $f(g)=\vol (\G)$ can be decided by looking at the integrals of $\tilde g^{-n/d}$ over small neighborhoods of real zeros of $\tilde g$.

\begin{proof}[Proof of Theorem \ref{thm:binary}]
Let us consider a non-negative binary form $g\in \overline{\mathcal{C}_{d,2}}$.
By Remark \ref{rem:reduction} and the above discussion, without loss of generality we can assume that $\tilde g\in \R[y]$ satisfies assumptions of Lemma \ref{lem:crit}.

By \eqref{eq:vol_new_coord} the volume of $\G=\{g\leq 1\}\subset \R^2$ is proportional to the one-dimensional integral $\int_{\R} \tilde g(y)^{-2/d}\,\textrm{d}y$, where $\tilde g(y)=g(y,1)$ is the dehomogenization of $g$.
If $y'\in \R$ is a real zero of $\tilde g\in \R[y]$ of order $k$, it is possible to write $\tilde g(y)=(y-y')^k h(y)$, where $h\in \R[y]$ is a polynomial that does not vanish at $y'$.
Performing an affine change of variables $y= z+y'$, $z\in \R$, we are left with the integral $\int_{\R} z^{-2k/d} \tilde h(z)^{-2/d}\, \textrm{d}z$, where $\tilde h(z)=h(z+y')$.
For a small $\delta>0$, the function $\tilde h^{-2/d}$ is bounded (from below and from above by some positive constants) over the interval $(-\delta,\delta)$. 
Thus, $\int_{-\delta}^\delta z^{-2k/d} \tilde h(z)^{-2/d}\, \textrm{d}z<\infty$ if and only if  $\int_{-\delta}^\delta z^{-2k/d}\,\textrm{d}z<\infty$. The latter condition is in turn equivalent to $2k/d<1$, which means that the order $k$ of the zero $y'\in \R$ is smaller than $d/2$. Since the argument applies to an arbitrary real zero of $\tilde g$, the main claim  follows from Lemma \ref{lem:crit}.

For $d=4$ the above condition on the order of real zeros reads $k<2$. Since the order of a real zero of a non-negative binary form is always an even number, only positive definite forms $g$ in $\overline{\mathcal{C}_{4,2}}$ have finite $f(g)$ and hence $\mathcal{V}_{4,2}=\mathcal{C}_{4,2}$.
\end{proof}

If $g\in \overline{\mathcal{C}_{d,n}}$ is a generic non-negative form, the polynomial $\tilde g\in \R[y_1,\dots, y_n]$ can have only isolated real zeros.
In fact, a stronger result holds.

\begin{lem}\label{lem:generic}
The restriction of a  generic non-negative form $g\in \overline{\mathcal{C}_{d,n}}$ to the sphere $\mathbb{S}^{n-1}\subset \R^n$ has only finitely many zeros.
In particular, $\tilde g$ has finitely many zeros in $\R^{n-1}$ and, after possibly an orthogonal change of variables, $g$ has no zeros in $\{(\y,0)\in \R^n: \y\neq 0\}$.
\end{lem}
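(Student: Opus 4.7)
The plan is to show that every zero of $g$ on $\mathbb{S}^{n-1}$ is an isolated critical point of the restricted function $g|_{\mathbb{S}^{n-1}}$, from which finiteness follows by compactness; the remaining two claims then follow quickly.

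First I would note that since $g\geq 0$, any zero $\z\in \mathbb{S}^{n-1}$ of $g$ is a global minimum, so $\nabla g(\z)=0$. (One can also see this from Euler's identity $\z^{\mathsf T}\nabla g(\z)=d\,g(\z)=0$ together with the fact that for any tangent $v\in T_\z\mathbb{S}^{n-1}=\z^\perp$, $\langle\nabla g(\z),v\rangle = 0$ since $\z$ is a minimum along the curve $t\mapsto (\z+tv)/\|\z+tv\|$.) Because $\nabla g(\z)=0$, the intrinsic Hessian of $g|_{\mathbb{S}^{n-1}}$ at the critical point $\z$ reduces to the restriction of the ambient Hessian $\mathrm{Hess}_\z g$ to $T_\z\mathbb{S}^{n-1}=\z^\perp$ (the second-fundamental-form correction $-\langle\nabla g(\z),\z\rangle\,\mathrm{II}_\z$ vanishes). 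By the genericity hypothesis, $\mathrm{Hess}_\z g$ restricted to $\z^\perp$ is positive definite, so $\z$ is a non-degenerate local minimum of $g|_{\mathbb{S}^{n-1}}$ and in particular an isolated zero.

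Next, compactness of $\mathbb{S}^{n-1}$ forces the zero set $\{g=0\}\cap \mathbb{S}^{n-1}$, being a set of isolated points in a compact space, to be finite. Since $g$ is homogeneous, its zero locus in $\R^n\setminus\{\mathbf{0}\}$ is the cone over $\{g=0\}\cap \mathbb{S}^{n-1}$ and is therefore a finite union of rays $\R_{>0}\cdot\z^{(1)},\ldots,\R_{>0}\cdot\z^{(m)}$.

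From this the remaining assertions follow immediately. A point $\y\in \R^{n-1}$ is a zero of $\tilde g$ if and only if $(\y,1)$ lies in the zero locus of $g$; each ray $\R_{>0}\cdot \z^{(i)}$ meets the affine hyperplane $\{x_n=1\}$ in at most one point (and in none if $z^{(i)}_n\leq 0$), so $\tilde g$ has at most $m$ real zeros in $\R^{n-1}$. Finally, the set of unit vectors $\e\in\mathbb{S}^{n-1}$ that are orthogonal to at least one of the zero rays is a finite union $\bigcup_{i=1}^m (\z^{(i)})^\perp\cap \mathbb{S}^{n-1}$, hence a proper closed subset of the sphere; choosing $\e$ outside this union and performing the orthogonal change of variables that sends $\e$ to $(0,\ldots,0,1)$, we obtain a new form (still denoted $g$) none of whose zero rays lies in the hyperplane $\{x_n=0\}$, i.e.\ $g$ has no zeros on $\{(\y,0):\y\neq \mathbf{0}\}$.

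The only step that requires care is the Hessian computation on the sphere: one must justify that, for a critical point $\z$ of $g|_{\mathbb{S}^{n-1}}$ at which the ambient gradient vanishes, the Riemannian Hessian on the sphere coincides with $\mathrm{Hess}_\z g|_{\z^\perp}$. This is standard but worth mentioning explicitly, since it is the place where the hypothesis that $\mathrm{Hess}_\z g$ is positive definite on $\z^\perp$ is converted into a non-degeneracy statement on the sphere.
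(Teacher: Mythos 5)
Your proof is correct and follows essentially the same route as the paper: both show that at a zero $\z\in\mathbb{S}^{n-1}$ the ambient gradient vanishes, use the genericity hypothesis (positive definiteness of $\mathrm{Hess}_\z g$ on $\z^\perp$) to conclude $\z$ is a non-degenerate critical point of $g|_{\mathbb{S}^{n-1}}$, hence isolated, and then invoke compactness and a rotation to place the zero rays off the hyperplane $\{x_n=0\}$. The only cosmetic difference is that you phrase the non-degeneracy step via the Riemannian Hessian and the vanishing of the second-fundamental-form correction, whereas the paper carries out the equivalent second-order Taylor expansion of $g$ along curves $\x+t\mathbf{v}+o(t)$ in $\mathbb{S}^{n-1}$ directly.
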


\begin{proof}
If $g\in\overline{\mathcal{C}_{d,n}}$ is a generic form and $\x\in \mathbb{S}^{n-1}$ is a zero of $g$, then for all unit vectors $\mathbf{v}\in \mathbb{S}^{n-1}$ that are orthogonal to $\x$, the Hessian satisfies 
\[\mathbf{v}^{\mathsf T}\textrm{Hess}_{\x}g\, \mathbf{v}=\sum_{i,j=1}^n \frac{\partial^2 g}{\partial x_i\partial x_j}(\x) v_i v_j\geq \varepsilon \]
for some $\varepsilon>0$. Since a real zero $\x\in\mathbb{S}^{n-1}$ of a non-negative form $g$ must also be its singular point, that is, $\frac{\partial g}{\partial x_i}(\x)=0$, $i=1,\dots, n$, we have 
\[ 
g(\x+t\mathbf{v}+o(t))\ =\ t^2\sum_{i,j=1}^n \frac{\partial^2 g}{\partial x_i\partial x_j}(\x)\, v_i v_j+ o(t^3)\ \geq\ t^2(\varepsilon+o(t))
\]
for points $\x+t\mathbf{v}+o(t)\in \mathbb{S}^{n-1}$ in a neighborhood of $\x\in\mathbb{S}^{n-1}$. 
In particular, for a sufficiently small $t>0$ and any $\mathbf{v}\in \mathbb{S}^{n-1}$ orthogonal to $\x$ we have $g(\x+t\mathbf{v}+o(t))>0$ and hence the zero $\x\in \mathbb{S}^{n-1}$ is isolated.
By compactness of the sphere, $g$ can have only finitely many isolated zeros in $\mathbb{S}^{n-1}$.

Injectivity of the mapping \[\y\in \R^{n-1}\mapsto \frac{(\y,1)}{\sqrt{\vert \y\vert^2+1}}\in \mathbb{S}^{n-1}\]
together with the homogeneity of $g$ implies that $\tilde g(\y)=g(\y,1)=0$ holds for only finitely many $\y\in\R^{n-1}$.
Finally, after possibly an orthogonal change of variables, the last coordinate of any real zero $\x\in\mathbb{S}^{n-1}$ of $g\in \overline{\mathcal{C}_{d,n}}$ is non-zero. 
\end{proof}

We are now ready to prove Theorem \ref{thm:generic}.

\begin{proof}[Proof of Theorem \ref{thm:generic}]
Let $g\in \overline{\mathcal{C}_{d,n}}$ be a generic non-negative form. This means in particular that for any real zero of $g$ of the form $\x=(\y,1)$ the kernel of the positive semi-definite $n\times n$ matrix $\textrm{Hess}_\x g=\left(\frac{\partial^2 g}{\partial x_i \partial x_j}(\x)\right)$ is one-dimensional and spanned by $\x$. 
Note that $\y\in \R^{n-1}$ is a zero of $\tilde g$ and the $(n-1)\times (n-1)$ Hessian matrix $\textrm{Hess}_\y \tilde g=\left(\frac{\partial^2 \tilde g}{\partial y_i\partial y_j}(\y)\right)$ is positive definite. 
Indeed, because $\tilde g\in \R[y_1,\dots, y_{n-1}]$ is non-negative, the matrix $\textrm{Hess}_\y \tilde g$ must be at least positive semi-definite. 
If $\u\in \R^{n-1}$ is a vector in the kernel of $\textrm{Hess}_\y \tilde g$, then $\w=(\u,0)\in \R^n$ must be in the kernel of $\textrm{Hess}_\x g$, because  
\[\w^{\mathsf T}\textrm{Hess}_\x g \,\w\ =\ \u^{\mathsf T} \textrm{Hess}_\y \tilde g\, \u\ =\ \boldsymbol{0}\]
and both Hessian matrices are positive semi-definite.
But then $\w=(\u,0)$ has to be proportional to $\x=(\y,1)$ and hence $\u=\mathbf{0}$. 
Positive definitedness of $\textrm{Hess}_{\y}\tilde g$ means in particular that 
\begin{align}\label{eq:Hess2} \v^{\mathsf T} \textrm{Hess}_{\y} \tilde g\, \v\ =\ \sum_{i,j=1}^{n-1}\frac{\partial^2 \tilde g}{\partial y_i \partial y_j}(\y) v_i v_j \ \geq \varepsilon
\end{align}
for some $\varepsilon>0$ and all unit vectors $\v\in \mathbb{S}^{n-2}\subset \R^{n-1}$.

Now, by Lemma \ref{lem:generic} and Remark \ref{rem:reduction}, we can assume that the non-negative polynomial $\tilde g\in \R[y_1,\dots, y_n]$ satisfies assumptions of Lemma \ref{lem:crit}. 
Thus, it is enough to prove that the integral $\int_{U} \tilde g(\z)^{-n/d}\,\textrm{d}\z$ over some neighborhood of $\y\in \R^{n-1}$ converges.
Take $U=B(\y,\delta)\subset \R^{n-1}$ to be the open ball of radius $\delta>0$ centered at $\y$. 
Since the real zero $\y$ of the non-negative polynomial $\tilde g$ is also its singular point, that is, $\frac{\partial \tilde g}{\partial y_i}(\y)=0$, $i=1,\dots, n-1$, the Taylor expansion of $\tilde g$ at the point of the ball $\y+t\v$, $\v\in \mathbb{S}^{n-2}$, $t\in [0,\delta)$, reads
\[\tilde g(\y+t\v)\ =\ t^2\sum_{i,j=1}^{n-1} \frac{\partial^2 \tilde g}{\partial y_i \partial y_j}(\y) v_i v_j + o(t^3)\ \geq\ t^2(\varepsilon+o(t)),\]
where we invoke \eqref{eq:Hess2}.
For a sufficiently small $\delta>0$ there exists $\varepsilon'>0$ so that $\varepsilon+o(t)>\varepsilon'$ for any $\y+t\v\in B(\y,\delta)$. 
This leads to an estimate
\begin{align*}\int_{B(\y,\delta)} \frac{1}{\tilde g(\z)^{n/d}}\, \textrm{d}\z\ &=\ \int_{\mathbb{S}^{n-2}}\int_0^\delta  \frac{t^{n-2}}{\tilde g(\y+t\v)^{n/d}}\, \textrm{d}t
\,\textrm{d}\mathbb{S}^{n-2}(\v)\\ &\leq\ \frac{\vol (\mathbb{S}^{n-2})}{\varepsilon'^{\,n/d}} \int_{0}^\delta t^{-2n/d+n-2}\,\textrm{d}t,
\end{align*}
where we performed a spherical change of variables.
Finally, the last integral is finite, since $-2n/d+n-2>-n/2+n-2 >-1$ whenever $d\geq 4$ and $n\geq 3$.
\end{proof}

\subsection{On the denseness of generic forms in the boundary of $\mathcal{C}_{d,n}$}

We conclude this section with a proposition, that justifies the term ``generic".
Note that our proof of this result exploits facts from classical algebraic geometry.
\begin{prop}\label{prop:nongeneric}
Non-generic non-negative forms in $\partial \mathcal{C}_{d,n}$ form a semialgebraic subset of $\H_{d,n}$ of codimension at least $2$. Moreover, this set is nowhere dense in $\partial \mathcal{C}_{d,n}\subset \H_{d,n}$, endowed with the topology induced from $(\H_{d,n},\langle \cdot,\cdot\rangle)$.
\end{prop}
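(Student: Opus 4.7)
The plan is to realize the set $N\subset \partial\mathcal{C}_{d,n}$ of non-generic non-negative forms as the image of a suitable incidence variety and to bound its dimension by a fiberwise count. Consider
\[
I\ =\ \left\{(g,\x)\in\overline{\mathcal{C}_{d,n}}\times \mathbb{S}^{n-1}\,:\,g(\x)=0,\ \det(\mathrm{Hess}_\x g|_{\x^\perp})=0\right\}.
\]
Since every real zero of a non-negative form is a global minimum, $\nabla g(\x)=0$ automatically and $\mathrm{Hess}_\x g|_{\x^\perp}$ is positive semi-definite there, so ``non-generic at $\x$'' is equivalent to the vanishing of the determinant above. Writing $\pi_1:I\to\H_{d,n}$ for the projection onto the first factor, one has $N=\pi_1(I)$, and the task reduces to bounding $\dim \pi_1(I)$.

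The key step is a fiber dimension estimate. For a fixed $\x\in\mathbb{S}^{n-1}$, the $2$-jet evaluation $g\mapsto(g(\x),\nabla g(\x),\mathrm{Hess}_\x g)$ on $\H_{d,n}$ is surjective onto the subspace defined by the Euler identities; after rotating $\x$ to $e_n$ this can be seen explicitly by prescribing coefficients of the monomials $x_n^d$, $x_ix_n^{d-1}$, and $x_ix_jx_n^{d-2}$. Consequently the linear conditions $g(\x)=0$ and $\nabla g(\x)=0$ cut out a subspace of codimension exactly $n$ in $\H_{d,n}$ (the Euler identity $\x\cdot\nabla g(\x)=d\,g(\x)$ makes one of the naive $n+1$ equations redundant), while the polynomial equation $\det(\mathrm{Hess}_\x g|_{\x^\perp})=0$ contributes one further codimension. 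Enlarging $I$ to
\[
\tilde I\ =\ \{(g,\x)\in \H_{d,n}\times\mathbb{S}^{n-1}\,:\,g(\x)=0,\ \nabla g(\x)=0,\ \det(\mathrm{Hess}_\x g|_{\x^\perp})=0\},
\]
one has $I\subseteq \tilde I$ (again because non-negativity forces $\nabla g(\x)=0$) and
\[
\dim I\ \le\ \dim \tilde I\ \le\ \bigl(\dim \H_{d,n}-(n+1)\bigr)+(n-1)\ =\ \dim\H_{d,n}-2.
\]

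Since semialgebraic projections are dimension non-increasing, $\dim N=\dim \pi_1(I)\le \dim \H_{d,n}-2$, and $N$ is semialgebraic by Tarski--Seidenberg; this gives the codimension-at-least-two assertion. For the nowhere-denseness, $\mathcal{C}_{d,n}$ is an open full-dimensional convex cone, so $\partial\mathcal{C}_{d,n}$ is a semialgebraic set of dimension $\dim\H_{d,n}-1$; a semialgebraic subset of strictly smaller dimension cannot contain a nonempty relatively open subset of $\partial\mathcal{C}_{d,n}$, so $N$ is nowhere dense there.

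The principal obstacle is justifying that the three conditions on the $2$-jet of $g$ at $\x$ are generically independent, so that the expected codimension is attained; this reduces to surjectivity of the $2$-jet evaluation map modulo the Euler identities, which is verifiable by the explicit monomial construction indicated above.
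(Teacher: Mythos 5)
Your argument is correct but takes a genuinely different route from the paper's. The paper passes to $\mathbb{C}$: it uses irreducibility of the discriminant hypersurface $D_{d,n}^{\mathbb{C}}$ of singular complex forms (citing Nie), shows that the locus $X_{d,n}^{\mathbb{C}}$ of forms with a singular point of Hessian corank at least two is an algebraic set, exhibits a witness form (the one from Example~\ref{ex:series}) lying in $D_{d,n}^{\mathbb{C}}\setminus X_{d,n}^{\mathbb{C}}$ to conclude $X_{d,n}^{\mathbb{C}}\subsetneq D_{d,n}^{\mathbb{C}}$ and hence has codimension at least two, and then restricts to the real points. You instead run a direct fiberwise real dimension count on the incidence variety $\tilde I\subset\H_{d,n}\times\mathbb{S}^{n-1}$: over each $\x$ the $n$ independent linear equations $\nabla g(\x)=0$ (which absorb $g(\x)=0$ via Euler) together with the nontrivially-vanishing polynomial $\det(\mathrm{Hess}_{\x}g|_{\x^{\perp}})$ give a fiber of codimension at least $n+1$, so $\dim\tilde I\le(n-1)+\dim\H_{d,n}-(n+1)=\dim\H_{d,n}-2$, and Tarski--Seidenberg transports the bound to $N=\pi_1(I)$. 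Your route is more elementary and self-contained (no complexification, no irreducibility of the discriminant, no explicit witness form), at the cost of having to verify the $2$-jet surjectivity, which you do. For the nowhere-denseness the paper cites Sinn for the fact that every nonempty relatively open subset of $\partial\mathcal{C}_{d,n}$ has codimension exactly one; you appeal to essentially the same fact via convexity, which is fine since the boundary of a full-dimensional open convex cone is, away from the apex, a topological manifold of codimension one. One small point to make explicit: since $N$ is semialgebraic, its closure $\overline{N}$ is semialgebraic of the same dimension, so the dimension comparison applies to $\overline{N}$ and yields nowhere-denseness, not merely empty interior of $N$ itself.
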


\begin{proof}
Throughout this proof, $\H_{d,n}^{\mathbb{C}}=\H_{d,n}\otimes_{\R} \mathbb{C}$ denotes the space of complex $n$-variate forms of degree $d$.
Those forms that are singular at some point of \emph{the complex projective $(n-1)$-space $\mathbb{C}\textrm{\normalfont{P}}^{n-1}$} form an irreducible algebraic hypersurface
\begin{equation*}
    D_{d,n}^{\mathbb{C}}\ =\ \left\{g\in \H_{d,n}^{\mathbb{C}}\,:\, \frac{\partial g}{\partial x_1}(\x)=\dots=\frac{\partial g}{\partial x_n}(\x)=0\ \textrm{for some}\ \x\in \mathbb{C}\textrm{P}^{n-1}\right\},
\end{equation*}
that can be retrieved as the Zariski closure of the boundary $\partial \mathcal{C}_{d,n}\subset \H_{d,n}\subset \H_{d,n}^{\mathbb{C}}$ of the cone of non-negative forms, see \cite[Thm. 4.1]{NIE2012167}.
Let us consider the set 
\begin{equation}\label{eq:X^C}
    X_{d,n}^{\mathbb{C}}\ =\ \left\{g\in \H_{d,n}^{\mathbb{C}}\ \large\colon\ \begin{aligned}&
    \frac{\partial g}{\partial x_1}(\x)=\dots=\frac{\partial g}{\partial x_n}(\x)= 0\ \textrm{and}\\ & \textrm{rank}\left(\textrm{Hess}_{\x}(g)\right)\leq n-2\ \textrm{for some}\ \x\in \mathbb{C}\textrm{P}^{n-1}\end{aligned}\right\}
    \end{equation}
 of forms with a singular point $\x\in \mathbb{C}\textrm{P}^{n-1}$, at which the Hessian matrix $\textrm{Hess}_{\x}(g)=\left(\frac{\partial^2 g}{\partial x_i \partial x_j}(\x)\right)$ has corank at least $2$.
Note that $X_{d,n}^{\mathbb{C}}$ is obtained by projecting to the first coordinate the algebraic variety of pairs $(g,\x)\in \H_{d,n}^{\mathbb{C}}\times \mathbb{C}\textrm{P}^{n-1}$ satisfying conditions in \eqref{eq:X^C}.
Thus, the set $X_{d,n}^{\mathbb{C}}\subset\H_{d,n}^{\mathbb{C}}$ is algebraic by \cite[Thm. 1.11]{Shafarevich}.

Let $g(\x)=x_n^{d-2}\sum_{i=1}^{n-1}x_i^2+\frac{2}{d}\sum_{i=1}^{n-1} x_i^d$ be a degree $d$ form that we considered in Example \ref{ex:series}. 
Apart from the unique real zero $(0:\dots:0:1)$, the form $g$ (in general) has other singular points in $\mathbb{C}\textrm{P}^{n-1}$. 
All these points are described by the equations 
\begin{equation}\label{eq:system}
\begin{aligned}
x_1(x_1^{d-2}+x_n^{d-2})\ =\ 0,\ \dots,\ x_{n-1}(x_{n-1}^{d-2}+x_n^{d-2})\ =\ 0,\quad \sum_{i=1}^{n-1} x_i^2\ =\ 0.
\end{aligned}
\end{equation}
One verifies directly that the Hessian matrix $\mathrm{Hess}_{\x}g$ at any solution $\x\in\mathbb{C}^n\setminus\{\mathbf{0}\}$ of \eqref{eq:system} has corank one. 
This implies that $g\in D_{d,n}^{\mathbb{C}}\setminus X_{d,n}^{\mathbb{C}}$. 
Since the variety $X_{d,n}^{\mathbb{C}}$ is properly (as just shown) contained in the irreducible hypersurface $D_{d,n}^{\mathbb{C}}\subset \H_{d,n}^{\mathbb{C}}$, its codimension in $\H_{d,n}^{\mathbb{C}}$ must be at least 2.
In particular, the real part $X_{d,n}:=X_{d,n}^{\mathbb{C}}\cap \H_{d,n}$ is of (real) codimension at least 2 in $\H_{d,n}$.
By definition, a non-negative form $g\in \partial\mathcal{C}_{d,n}$ is non-generic if and only if it is in $X_{d,n}$. 
Thus, the semialgebraic subset $X_{d,n}\cap \partial\mathcal{C}_{d,n}\subset \H_{d,n}$ of non-generic forms has codimension at least 2.

To prove the second claim, let $U\subset \H_{d,n}$ be any open subset with $U\cap \partial\mathcal{C}_{d,n}\neq \varnothing$. 
By \cite[Rem. 2.4]{Sinn} and the proof of \cite[Lemma 2.5]{Sinn}, the open subset $U\cap \partial \mathcal{C}_{d,n}$ of the boundary of $\mathcal{C}_{d,n}$ has codimension $1$.
Since the set $U\cap (X_{d,n}\cap \partial \mathcal{C}_{d,n})$ is of codimension at least $2$, it is not dense in $U\cap \partial \mathcal{C}_{d,n}$ and hence $X_{d,n}\cap \partial\mathcal{C}_{d,n}$ is nowhere dense in $\partial\mathcal{C}_{d,n}$.
\end{proof}

\bibliographystyle{plain}

\end{document}